\tikzset{node distance=2cm, auto}
\newcommand{\conj}[1]{\quad\textnormal{ #1 }\quad}
\newcommand{\inp}[1]{\ensuremath{\langle #1 \rangle}}
\newcommand{\normaltext}[1]{\textnormal{#1}}
\def\imod#1{\allowbreak\mkern2.5mu({\operator@font mod}\,#1)}
\renewcommand{\a}{\alpha}
\renewcommand{\b}{\beta}
\newcommand{\e}{\epsilon}
\newcommand{\opp}{\oplus}
\newcommand{\ott}{\otimes}
\newcommand{\ga}{\gamma}
\renewcommand{\th}{\theta}
\newcommand{\CPPic}[1]{
\begin{minipage}{.8in}
\includegraphics[scale=1.1]{#1}
\end{minipage}
}
\newcommand{\aF}{\mathcal{F}}
\newcommand{\aK}{\mathcal{K}}
\newcommand{\aL}{\mathcal{L}}
\newcommand{\CC}{\mathbb{C}}
\newcommand{\HH}{\mathbb{H}}
\newcommand{\RR}{\mathbb{R}}
\newcommand{\ZZ}{\mathbb{Z}}
\theoremstyle{plain}
\newtheorem{thm}{Theorem}[section]
\newtheorem{theorem}[thm]{Theorem}
\newtheorem{question}[thm]{Question}
\newtheorem{prop}[thm]{Proposition}
\newtheorem{cor}[thm]{Corollary}
\newtheorem{lemma}[thm]{Lemma}
\theoremstyle{remark}
\theoremstyle{definition}
\newtheorem{defn}[thm]{Definition}
\newtheorem{definition}[thm]{Definition}
\newtheorem{rmk}[thm]{Remark}
\newtheorem{remark}[thm]{Remark}
\numberwithin{equation}{section}
\def\imod#1{\allowbreak\mkern2.5mu({\operator@font mod}\,#1)}
\def\namedlabel#1#2{\begingroup
   \def\@currentlabel{#2}%
   \label{#1}\endgroup
}
\newcommand{\cL}{\mathscr{L}\!}
\newcommand{\vnp}[1]{\lvert #1 \rvert}
\newcommand{\I}{[0,1]}
\newcommand{\xto}[1]{\xrightarrow{#1}}
\theoremstyle{plain}
\newcommand{\im}{im}
\newcommand{\go}[1]{\mathfrak{g}(#1)}
\newcommand{\gor}[1]{\mathfrak{g}_R(#1)}
\newcommand{\gs}{\go{S}}
\newcommand{\grs}{\gor{S}}
\newcommand{\hqs}{H_q(S)}
\newcommand{\F}{\aF}
\newcommand{\g}{\mathfrak{g}}
\newcommand{\freehtpy}{\hat{\pi}}
\newcommand{\reghtpy}{\hat{\pi}_R}
\newtheorem*{theorem*}{Theorem}
\begin{document}

\title{Gradation of Algebras of Curves by the Winding Number}
\author{Mohamed Imad Bakhira and Benjamin Cooper}

\date{}
\maketitle

\newcommand{\Addresses}{{
  \bigskip
  \footnotesize
\textsc{University of Iowa, Department of Mathematics, 14 MacLean Hall, Iowa City, IA 52242}\newline
  \textit{E-mail:} \texttt{mohamedimad-bakhira\char 64uiowa.edu} \newline
\textit{E-mail:} \texttt{ben-cooper\char 64 uiowa.edu}
}}

\begin{abstract}
  We construct a new grading on the Goldman Lie algebra of a closed oriented
  surface by the winding number. This grading induces a grading on the
  HOMFLY-PT skein algebra and related algebras. Our work supports the conjectures
  of B. Cooper and P. Samuelson \cite{cooper}.
\end{abstract}

\section{Introduction}\label{introsec}

The Goldman Lie algebra $\gs$ of a surface $S$ is the Lie algebra of free
homotopy classes of loops. The product is given by summing the signed
concatenations of curves over their points of intersection. This Lie algebra
encodes a wealth of information about intersection numbers of curves
\cite{chas}, maintains a close relation to skein invariants \cite{turaev}
and invariant functions on spaces of surface group representations
\cite{goldman}.  It is also a fundamental component of string topology
\cite{chas_sullivan}.  In this article, we construct a new cyclic grading on
the Goldman Lie algebras and the HOMFLY-PT skein algebras of surfaces. In
particular, there is a product preserving decomposition
$$\gs = \bigoplus_{a\in\ZZ/\chi} \gs_a$$
where $\chi = \chi(S)$ is the Euler characteristic of $S$. 
The existence of a cyclic grading by the
winding number is predicted by recent work of B. Cooper and P. Samuelson
\cite{cooper}. This work relates the HOMFLY-PT skein algebra to the Hall algebra
of the Fukaya category.  For closed surfaces, these conjectures cannot be
verified directly because the lack of $\mathbb{Z}$-grading on the Fukaya
category currently impedes a rigorous study of their Hall algebras. In this way,
our work constitutes new evidence for these conjectures. Presently, the only
non-trivial evidence for closed surfaces $S$ of genus greater than one. This new
grading may also allow us to glean new information about these algebras
and their many connections to other areas of mathematics.

In the remainder of the introduction, we explain what is meant by
winding number, we discuss the conjectural context for our construction
and we present an outline of our approach to the construction
of the grading.

\subsection{Winding numbers}\label{windingsec}
The winding number of an closed oriented immersed curve in a surface is the total signed number of revolutions that its tangent vector undergoes in one traversal. In the plane this is a well-defined integer, but the generalization to closed surfaces of genus $g > 1$ has some indeterminacy. For a survey of winding number see \cite{MC}. A brief introduction is provided below.

In his study of regular closed curves in the plane, H. Whitney showed that the planar winding number is invariant under regular homotopy \cite{whitney}. 
In his thesis work \cite{smale}, S. Smale showed that for a Riemannian manifold $M$, 
$$Imm_p(S^1,M) \simeq \Omega_{p'}S(TM),$$
the space $Imm_p(S^1,M)$ of immersed loops at the basepoint $p=(q,v)\in TM$ with initial and terminal velocity $v\in T_q M$ is weakly homotopic to the space of loops in the unit tangent bundle $S(TM)$ with basepoint 
$p'=(q,v/\vnp{v})$.  This equivalence determines isomorphisms
\begin{equation}\label{smaleeqn}
\pi_0 (Imm_p(S^1,M))\cong \pi_0 (\Omega_{p'}S(TM)) \cong \pi_1 (S(TM),p').
\end{equation}
The {\em regular fundamental group} $\pi_R(M,p)$ is defined to be this group
$$\pi_R (M,p):=\pi_0 (Imm_p(S^1,M)).$$
In words, $\pi_R(M,p)$ is the group of regular homotopy classes of curves based at $p$ in $M$ under the operation of loop concatentation, see Def. \ref{loopproddef}. Eqn. \eqref{smaleeqn} implies that $\pi_R(M,p)$ is independent of basepoint. 

H. Seifert computed the group $\pi_1(S(TS))$ for a closed orientable surface $S$ of genus $g$ \cite{seifert}. Recall that such a surface $S$ can be expressed as a quotient of the $4g$-gon in which the $i$th edge is identified with the $i+2$nd edge for each $0\leq i < 4g$ such that $i\equiv 0,1\imod{4}$. This decomposition gives us the presentation
\begin{equation}\label{pipres}
  \pi_1 (S) \cong \inp{a_1,b_1,\ldots,a_g,b_g \mid r } \conj{ where } r=\prod_{i=1}^{g}[a_i,b_i].
\end{equation}  
and $[a,b]= aba^{-1}b^{-1}$.
If $f = s^{-1}(q)$ denotes the homotopy class of a fiber of the unit circle bundle $s : S(TS)\to S$ then the fundamental group of the unit tangent bundle has a compatible presentation
\begin{equation}\label{spres}
  \pi_1(S(TS))\cong
  \inp{a_1,b_1,...,a_g,b_g,f \mid f^{2g-2} r\textnormal{ and } [f,a_i], [f,b_i] \textnormal{ for } 1\leq i \leq  g}.
  \end{equation}

From this perspective, the {\em winding number} $\omega(\ga)$ of a regular
closed curve $\ga\in \pi_R(S)$ is its projection onto the torsion component
$\ZZ/\chi \subset \pi_R(S)^{ab}$ of the abelianization. In more detail, the
abelianization can be computed from Equation \eqref{spres} using Smale's
isomorphism
\begin{equation}\label{stareq}
  \pi_R(S)^{ab}\cong\pi_1(S(TS))^{ab} \cong \ZZ^{2g}\oplus \ZZ/\chi \conj{ where } \chi = 2g-2
  \end{equation}
and the observation that $r=1$ in the abelianization. The torsion group is
generated by the fiber $f$. The winding number
homomorphism
\begin{equation}\label{windingnumbereq}
  \omega : \pi_R (S)\to \mathbb{Z}/\chi
  \end{equation}
first maps $\ga \in \pi_R(S)$ to the abelianization $\pi_R(S)^{ab}$ and then
extracts the coefficient of the fiber $f$.

There are many curves with non-zero winding number.  For instance, the separating curve on the genus 2 surface featured in Figure \ref{fig:nontrivialwinding} has winding number $1\imod{2}$.
\begin{figure}[h]
\begin{center}
\begin{overpic}[scale=0.8]
{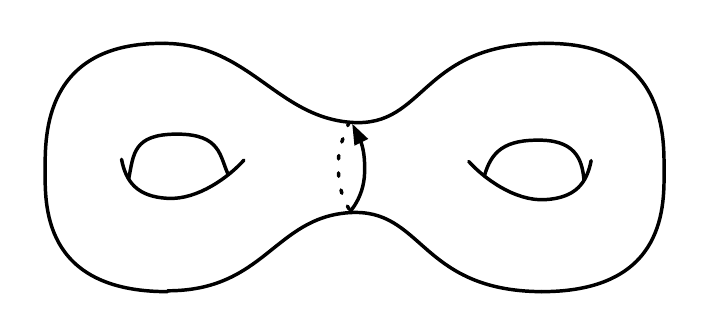}
\put(77,63){$\ga$}
\end{overpic}
\end{center}
  \caption{A curve $\ga$ with winding number $\omega(\ga) = 1 \in\ZZ/2$}
  \label{fig:nontrivialwinding}
\end{figure}

In his work on winding numbers of regular curves on surfaces B. L. Reinhart
introduced an integral presentation for the winding number
\begin{equation}\label{rheq}
  \omega(\ga)=\frac{1}{2\pi}\int_{S^1} A_\ga^* (d\theta) \quad\quad\imod {\chi}
\end{equation}  
as the degree of the map determined by the angle $A_\ga$ between the derivative
of $\ga$ and a vector field $S$, see Definition \ref{RHdef} and
\cite{reinhart}. Reinhart's integral satisfies the property that
$\omega(\gamma)=0$ for each of the regular representatives of generators
$\gamma \in\{a_i,b_i\}_{i=1}^g$ and $\omega(\eta)=1$ for a small
counterclockwise contractible regular loop $\eta$. This makes it compatible
with the definition above, as the generators $\{a_i,b_i\}_{i=1}^{g}$
determine the $\ZZ^{2g}$-component of $\pi_R(S)^{ab}$, while the fiber $f$
corresponds to a small homotopically trivial counterclockwise loop. See Section \ref{rhsec} for further discussion.

\begin{rmk}
The winding number depends on the splitting isomorphism in Eqn. \eqref{stareq}. Although the arguments in this paper hold for any such isomorphism, we will use the one determined by the choice of $\{a_i,b_i\}$ basis mentioned in the above. For some applications, it may be natural to grade algebras by $H_1(S(TS))$ rather than $\ZZ^{2g} \opp \ZZ/\chi$.
  \end{rmk}

\subsection{Context}\label{contextsec}
The work of B. Cooper and P. Samuelson establishes a relationship between
the HOMFLY-PT skein algebra of a surface and the Hall algebra of the Fukaya
category of the surface \cite{cooper}. For certain surfaces with boundary, they
prove that elements in the Hall algebra of the Fukaya category satisfy the
HOMFLY-PT skein relation. It is natural to ask whether there is evidence for
a relationship between the Hall algebra of the Fukaya category of a {\em
  closed surface} $S$ and the skein algebra. The theorem of H. Morton and
P. Samuelson which relates the skein algebra of the torus and the elliptic
Hall algebra \cite{peter} provides some evidence in genus one when one
assumes that a version of homological mirror symmetry holds over finite
fields. Our construction of a grading by winding number on the HOMFLY-PT
skein algebra provides evidence for this conjecture when the surfaces are
closed and $\chi < 0$.

Since the Hall algebra of a category $\mathcal{C}$ is always graded by the
Grothendieck group $K_0(\mathcal{C})$ of the category, the Hall algebra of the Fukaya
category $\F(S)$ should be graded by the group $K_0(\F(S))$. M. Abouzaid
computed this group for closed surfaces
$$K_0(\F(S))\cong H_1(S(TS))\oplus\mathbb{R} \cong \mathbb{Z}^{2g}\oplus \mathbb{Z}/\chi \oplus\mathbb{R}$$
when the Euler characteristic satisfies $\chi<0$ \cite{abouzaid}. So if the Hall algebra
of the Fukaya category of a closed surface could be defined
then it would be graded by the group $H_1(S) \opp \mathbb{Z}/\chi$. 
Because of these observations, it was conjectured that the HOMFLY-PT skein algebra shares this grading.

Now, it is considered well-known that one can grade skein algebras by the
first homology group $H_1(S)$ of the surface. However, a grading by the
winding number $\omega$ is somewhat counterintuitive because the curves
which constitute elements of the skein algebra live in the 3-dimensional
space $S\times [0,1]$, so Reinhart's integral \eqref{rheq} is not
applicable. It is in this sense that the $\ZZ/\chi$-grading by winding
number on the HOMFLY-PT skein algebra extends and supports the conjectures
of B. Cooper and P. Samuelson.

\subsection{Results}\label{resultsec}
The principal result of this paper is to establish a grading of the Goldman Lie algebra $\gs$ and its quantization $\hqs$ the HOMFLY-PT skein algebra by the winding number. The theorem below summarizes our main results.
\begin{theorem*}
Let $S$ be a compact connected oriented surface. There is a canonical
extension of the winding number homomorphism
$$\omega : \pi_R (S)\to \mathbb{Z}/\chi\conj{ $\rightsquigarrow$} \hat{\pi}(S)\to\mathbb{Z}/\chi$$ 
to a map on the set $\hat{\pi}(S)$ of free homotopy classes of loops on $S$. 
This extension determines a grading on the Goldman Lie algebra
$$\gs = \bigoplus_{a\in \ZZ/\chi} \mathfrak{g}(S)_a \conj{ where } [\mathfrak{g}(S)_a,\mathfrak{g}(S)_b] \subset \mathfrak{g}(S)_{a+b}.$$
There are similar gradings on the regular Goldman Lie algebra $\grs$ and the HOMFLY-PT skein algebra $\hqs$.
\end{theorem*}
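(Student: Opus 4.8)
The plan is to establish the grading in three stages: first on the regular Goldman Lie algebra $\grs$, then transfer it to $\gs$, and finally to the skein algebra $\hqs$. Since $\omega\colon\pi_R(S)\to\ZZ/\chi$ of \eqref{windingnumbereq} is a homomorphism into an abelian group, it is conjugation-invariant and therefore descends to a well-defined function on free regular homotopy classes, which is exactly the distinguished basis of $\grs$. So the first task is to verify that this $\ZZ/\chi$-valued invariant is additive under the Goldman product. Because the bracket is a sum of signed concatenations $\a\ast_p\b$ over transverse double points $p$, the entire grading property reduces to one local computation.

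\emph{Lemma (turning at a double point).} For transverse immersed loops $\a,\b$ meeting at $p$, the smoothed concatenation satisfies $\omega(\a\ast_p\b)=\omega(\a)+\omega(\b)$ in $\ZZ/\chi$. I would prove this directly from Reinhart's integral \eqref{rheq}: along the two smooth arcs the angle form $A^*(d\theta)$ integrates to $2\pi\,\omega(\a)$ and $2\pi\,\omega(\b)$ respectively, while the two corners of $\a\ast_p\b$, where the tangent jumps from $\a'(p)$ to $\b'(p)$ and then back, contribute exterior angles that are negatives of one another and hence cancel. As the reference vector field is continuous at $p$, its contribution is identical in each term. This yields $[\grs_a,\grs_b]\subset\grs_{a+b}$ at once and is the computational heart of the whole theorem.

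The main obstacle is the passage to ordinary free homotopy classes $\hat\pi(S)$, because $\omega$ does \emph{not} descend along the forgetful map $\hat\pi_R(S)\to\hat\pi(S)$: by \eqref{spres} the kernel of $\pi_R(S)\to\pi_1(S)$ is generated by the fibre class $f$ with $\omega(f)=1$, so inserting a small kink fixes the free homotopy class while shifting the naive winding number by $1$. To extend $\omega$ canonically to $\hat\pi(S)$ I would fix a hyperbolic metric and a vector field with a single zero of index $-\chi$, and define $\omega(c)$ as Reinhart's winding number of the unique geodesic representative of $c$, reduced mod $\chi$; uniqueness of the geodesic gives well-definedness, and the reduction absorbs the dependence on the field, since sweeping a strand across the zero changes the integer winding by its index $-\chi\equiv0\imod{\chi}$. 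The crux is then additivity for these canonical representatives: the immersed concatenation $\a\ast_p\b$ of two geodesics is not geodesic, and tautening it removes kinks and bigons, each altering the integer winding by $\pm1$. I would show the net change is a multiple of $\chi$ by a Poincar\'e--Hopf and Gauss--Bonnet count, tracking how often the tautening homotopy crosses the zero of the field; equivalently, one may replace the geodesic prescription by a Whitney-type correction (winding minus signed self-crossings) that is manifestly insensitive to kinks. Either route makes the Lemma's identity survive on $\hat\pi(S)$ and delivers $[\gs_a,\gs_b]\subset\gs_{a+b}$.

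Finally, for the HOMFLY-PT skein algebra $\hqs$ I would grade a link diagram in $S\times[0,1]$ by the winding number of its projection to $S$ and check that the defining relations are homogeneous. A crossing change relates diagrams with identical projections, so $L_+$ and $L_-$ carry the same winding; the oriented resolution $L_0$ reconnects the four local arcs respecting orientation, and the same corner-cancellation as in the Lemma shows it shares that winding in $\ZZ/\chi$, so each HOMFLY-PT triple lies in a single graded piece. Multiplicativity under the stacking product follows once more from the Lemma, applied to the crossings created when one diagram is pushed past another. Combined with the previous stage, this produces the asserted grading on $\hqs$ and completes the proof.
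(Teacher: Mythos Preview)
Your overall architecture (grade $\grs$ first, transfer to $\gs$, then $\hqs$) and your local ``turning at a double point'' lemma match the paper's Proposition~\ref{wadditiveprop} almost exactly. The substantive divergence is in how you extend $\omega$ to $\hat\pi(S)$. You propose geodesic representatives; the paper uses \emph{unobstructed} representatives (curves whose lifts to the universal cover are embedded, equivalently curves without fish-tails, by Abouzaid's Lemma~\ref{abouzaidlem}). Geodesics are unobstructed, so your choice is a special case of theirs---but your argument for additivity does not go through as written.

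The gap is the step where you tauten $\a\ast_p\b$ to a geodesic and assert that the winding changes by a multiple of $\chi$. Your Gauss--Bonnet/Poincar\'e--Hopf sketch does not establish this: removing a single fish-tail shifts $\omega$ by $\pm 1$, which is nonzero mod~$\chi$ once $|\chi|>1$, and nothing in a curvature count tells you the number of fish-tails removed is divisible by $\chi$. The Whitney-type correction ``winding minus signed self-crossings'' fares no better, since the sign of a self-crossing of an immersed loop depends on a basepoint and the resulting quantity is not additive under concatenation (the mutual crossings of $\a$ and $\b$ intrude). What actually makes your geodesic choice work is the paper's mechanism: if $\a,\b$ are unobstructed and $p$ is not a bigon corner, then $\a\cdot_p\b$ is already unobstructed (Lemma~\ref{prodlem}), hence freely \emph{regularly} homotopic to its geodesic representative (Lemma~\ref{uniquenesslemma}, via Epstein's theorem), so no fish-tails are removed and the winding is preserved on the nose. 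Bigon corners are then handled by the cancellation in Remark~\ref{NWRK}. Since geodesics on a hyperbolic surface never bound bigons, your representatives in fact satisfy the hypothesis at every $p$---but you need the unobstructedness argument, not Gauss--Bonnet, to close the loop.

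One smaller omission: for $\hqs$ you treat only the Conway relation, but the framing (Reidemeister~I) relation $L_1=v^{\pm1}L_2$ is not homogeneous unless the variable $v$ itself carries degree $\pm1$; the paper builds this into the definition $\omega_H(x^n v^m q^l L)=m+\omega_R(\rho_*(L))$. Also, multiplicativity under stacking is simpler than you suggest---no crossings are ``created'': the projection of $L\cdot L'$ is just the disjoint union of the projections, so $\omega_R$ adds trivially.
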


 The main obstacle to the construction of the grading is the domain
 of definition: the notion of a winding number is defined on regular
 homotopy classes whereas the Goldman Lie algebra is defined on free
 homotopy classes of curves. 
The winding number of a curve is not well-defined on curves up to free
homotopy because there are multiple regular homotopy classes within any
given free homotopy class.
More concretely, topological artifacts
 such as fish-tails can be introduced to artificially increase or decrease
 the winding number of a curve, see Section \ref{absec}.
\begin{figure}[h]
\begin{center}
  \includegraphics[scale=.75]{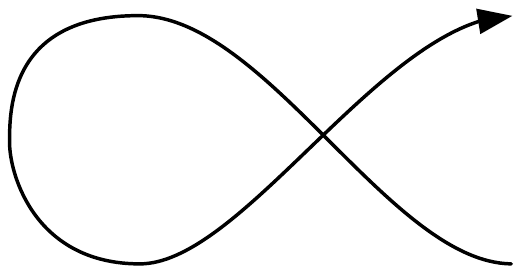}
  \caption{A fish-tail}
  \label{fig:fishtail}
\end{center}
\end{figure}

{\bf Organization} Section \ref{notationsec} reviews concepts which will be
important in the main construction. In Section \ref{constructionsec}, the
relationship between the set of free regular homotopy classes of curves
$\hat{\pi}_R(S)$ and free homotopy classes of curves $\hat{\pi}(S)$ is
articulated. In \S\ref{bundlemapssec}, we show that there is a surjective
loop product preserving map $s_* :\hat{\pi}_R(S) \to \hat{\pi}(S)$, the
fiber of which is a $\ZZ$-torsor generated by the operation of adding or
removing fish-tails. In \S\ref{bijectionsec}, we show that the map $s_*$
admits a section
$\Phi : \hat{\pi}(S) \to \hat{\pi}_{U,R}(S) \subset \hat{\pi}_R(S)$, taking
free homotopy classes of curves to unobstructed representatives.  In
\S\ref{windgoldsec}, the winding grading on the Goldman Lie algebra $\gs$ is
introduced by setting the winding number of a non-contractible free homotopy
class $[\a]$ to be the winding number of its unobstructed representative
$\Phi([\a])$.  We check that this definition respects concatenation of
loops, extends additively over the Lie bracket and so defines a map
$\omega : \gs \to \ZZ/\chi$ which determines a grading of the Goldman Lie
algebra. Section \ref{regoldmansec} introduces the regular Goldman Lie
algebra $\grs$. In Section \ref{homflyptsec}, we prove that the winding
number grading of the regular Goldman Lie algebra extends to the HOMFLY-PT
skein algebra.

{\bf Acknowledgements} The second author would like to thank P. Samuelson
for the friendly conversations and suggestions. Both authors would like to
thank C. Frohman and J. Greene, as well as the referee for the careful
reading.

\section{Notations and definitions}\label{notationsec}

We recall a few basic definitions, the definition of the Goldman Lie algebra, Reinhart's integral definition of the winding number and some results about unobstructed curves.

\subsection{Grading}\label{gradingsec}
If $M$ is an $R$-module and $A$ is an abelian group then a {\em grading} of $M$ by $A$ is a direct sum decomposition $M \cong \opp_{a\in A} M_a$.  If $M$ is an algebra then we require that $mn \in M_{a+b}$ when $m\in M_a$ and $n\in M_b$. Likewise, if $M$ is a Lie algebra then we require that $[m,n]\in M_{a+b}$ when $m\in M_a$ and $n\in M_b$.

If $S$ is a set and $M=R\inp{S}$ is the free $R$-module on $S$ then a grading of $M$ by $A$ is determined by a map $gr : S \to A$; in this case, $M_a := R\inp{m \mid gr(m) = a}$. If $M$ is a Lie algebra then $[m,n]\in M_{a+b}$ when $gr(m) = a$ and $gr(n) = b$.

\subsection{Surface topology}\label{surfacetopsec}
Throughout this paper, $S$ will always be a closed connected oriented
surface.  A set of curves $\{ \ga_i : S^1 \to S \}_{i\in I}$ on $S$ is said
to be in general position when all curves are normal closed, all
intersections are transverse and there are no triple intersections among
curves. We will always assume curves are in general position.

Two curves $\a,\b : S^1\to S$ are {\em freely homotopic} when they are in
the same path component of the free loop space $Map(S^1,S)$, they are {\em
  homotopic} when they share a basepoint $p$ and are contained in the same
path component of the based loop space $\Omega_pS=Map_p(S^1,S)$.  Two embeddings
$\a$ and $\b$ are {\em isotopic} when they are contained in the same path
component of the space of embeddings $Emb(S^1,S)$. Two immersions are {\em
  freely regularly homotopic} when they are contained in the same path
component of the space of immersions $Imm(S^1,S)$ and {\em regularly
  homotopic} when they are contained in the same path component of the space
of immersions $Imm_p(S^1,S)$ based at $p$.

The set of {\em free homotopy classes of loops on $S$} will be denoted by
$\hat{\pi}(S)$. A free homotopy class of map in $\hat{\pi}(S)$ can also be
thought of as a conjugacy class of $\pi_1(S)$.  If $\ga : S^1\to S$ is a
loop then we will denote by $[\ga]$ its corresponding free homotopy class.
Any other equivalence relation on curves will be indicated by a subscript on
the brackets, for example
$$[\ga]_{\pi_1}\in\pi_1(S,p)$$ 
means that $\ga : S^1\to S$ should be seen as an equivalence class $[\gamma]$ of curves in $S$ based at $p$ in $\pi_1(S,p)$.

\subsection{The Goldman Lie algebra}

Here we recall the definition of the Goldman Lie algebra. 
 The first definition recalls how we will concatenate curves.

\begin{defn}{($\a\cdot_p\b$)}\label{loopproddef} 
Suppose that two loops $\a,\b : S^1 \to S$ cross at a point $p\in \a\cap \b$ then $\a$ and $\b$ define elements $[\a]_{\pi_1}, [\b]_{\pi_1} \in \pi_1(S,p)$ and their {\em oriented loop product} $[\alpha \cdot_p\beta]_{\pi_1} := [\a]_{\pi_1}[\b]_{\pi_1}$ is just their product in $\pi_1(S,p)$. More generally, $[\a \cdot_p \b]$ will denote the image of $[\alpha \cdot_p\beta]_{\pi_1}$ in $\hat{\pi}(S)$.  The free homotopy class of $\a\cdot_p\b$ does not depend on the choice of representatives $\a\in[\a]$ or $\b\in[\b]$.
\end{defn}
Up to free homotopy, in a small neighborhood $U$ of $p\in\a\cap\b$, the product $\alpha\cdot_p\beta$ can be viewed as replacing the picture on the lefthand side of Figure \ref{fig:loopprod} with the righthand side.
\begin{figure}[h]
\begin{center}
\begin{overpic}[scale=0.6]
{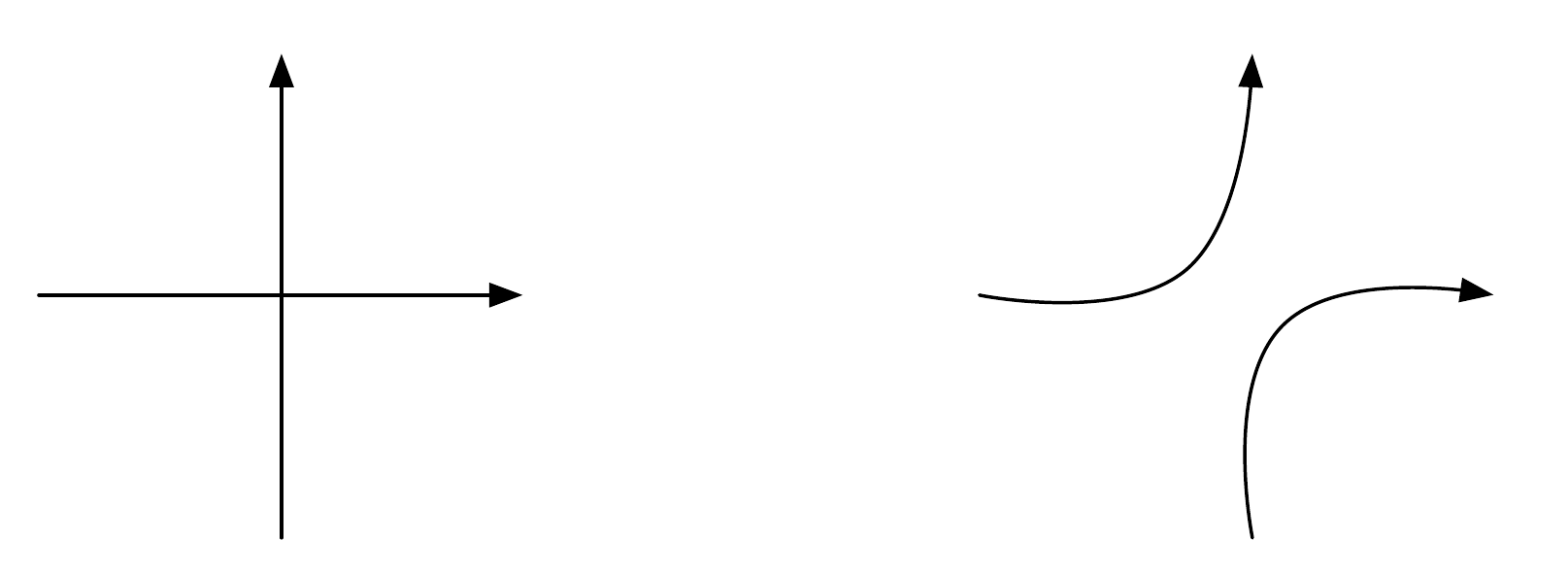}
\put(125,47){$\rightsquigarrow $}
\put(55,80){$\a$}
\put(8,38){$\b$}
\put(230,80){$\a\cdot_p\b$}
\put(55,38){$p$}
\end{overpic}
\end{center}
  \caption{The loop product}
  \label{fig:loopprod}
\end{figure}

This oriented loop product preserves immersions because the two diagrams agree on vectors in the boundary $\partial U$.

\begin{defn}{($\e_p$)}
If two immersed curves $\a,\b : (0,1) \to S$ intersect at a point $p$ then the {\em sign} $\e_p(\a,\b)$ of their intersection is $+1$ if the ordered pair of derivatives $(\dot{\a}(p),\dot{\b}(p))$ agrees with the orientation of $S$ at $p$ and $\e_p(\a,\b) := -1$ otherwise.
\end{defn}

The loop product and the sign are important to us because they are used to define the Goldman Lie algebra of the surface.

\begin{defn}[Goldman Lie algebra]\label{goldmandef}
The {\em Goldman Lie algebra} $\gs$
of a surface $S$ is the free abelian group on the set $\hat{\pi}(S)$ of free homotopy classes of curves equipped with a Lie bracket
$$\gs := \ZZ\inp{\hat{\pi}(S)}.$$
Given two classes of curves $[\a],[\b]\in \hat{\pi}(S)$, we choose
representatives $\a\in[\a]$ and $\b\in[\b]$ so that the Goldman Lie bracket is defined to be the signed
sum of their oriented loop products at points $p$ of intersection
\begin{equation}\label{goldmanbracket}
  [[\alpha],[\beta]] := \sum _{p\in\alpha\cap\beta} \epsilon_p(\alpha,\beta) [\alpha \cdot_p \beta],
  \end{equation}
see \cite[Thm. 5.3]{goldman} for further details.
\end{defn}

\subsection{Reinhart's winding number}\label{rhsec}
As mentioned in the introduction, B. L. Reinhart used differential topology
to give a definition of the winding number $\omega : \pi_R(S)\to \ZZ/\chi$,
see \cite{reinhart}. Here we recall a few more details.

\begin{defn}[Winding number homomorphism]\label{RHdef}
  Choose a Riemannian metric on $S$ and a vector field $X$ with isolated
  zeros. Let $\angle_p(Y,Z)$ denote the angle between two vectors $Y$
  and $Z$ in $T_pS$ at $p\in S$.

 If $\ga(\theta): S^1\to S$ is an
  immersed curve then there is a map
  $A_\ga : S^1\to S^1$ given by
$$A_{\ga}(\th) := \angle_p (\dot{\ga},X) \conj{ where } p = \ga(\th)$$
the angle between the tangent $\dot{\ga} = \ga_* (d/d\theta)$ of $\ga$ at $p$ and the vector $X = X_p$ at the same point.

Reinhart's {\em winding number} $\omega(\ga)\in\ZZ/\chi$ is the degree of the map $A_\ga$
taken modulo the Euler characteristic of $S$
$$ \omega(\ga)=\frac{1}{2\pi}\int_{S^1} A_\ga^* (d\theta) \quad\quad\imod {\chi}$$
when the vector field $X$ is chosen so that $\omega(a_i) = 0$ and $\omega(b_i) = 0$ for $\{a_i,b_i\}$ curves in the 1-skeleton which determines Eqn. \eqref{pipres}.
\end{defn}

Reinhart showed that this formula is independent of
the choice of the vector field $X$, within the imposed constraints, and the choice of the regular homotopy
representative of $\ga$ after the degree is taken modulo the Euler
characteristic of the surface \cite[Prop. 2]{reinhart}. This is due to the value of the integral
shifting by $\pm\chi$ as a regular homotopy moves a segment of $\ga$
over a zero of $X$.

\begin{remark}
Reinhart's definition is also independent of the chosen metric.
This is because the space of all
  Riemannian metrics on $S$ is path-connected, if the metric is
  continuously varied then the value of the integral varies continuously, 
  but since $\mathbb{Z}$ is discrete, the value must be constant.
\end{remark}

\subsection{Unobstructed curves and Abouzaid's lemma}\label{absec}
In this section we recall the notion of unobstructed curves and Abouzaid's
lemma which characterizes them in a convenient way.

\begin{defn}[Unobstructed curve]
Let $\tilde{S} \to S$ be the universal covering space of $S$. We say that an immersed loop $\a : S^1 \to S$ is {\em unobstructed} when it lifts to a properly embedded path in $\tilde{S}$.
\end{defn}

Abouzaid characterized unobstructed curves in terms of
fish-tails and contractability \cite{abouzaid}. Roughly speaking, a fish-tail is a homotopically trivial self-intersection, see Figure \ref{fig:fishtail}.

\begin{defn}[Fish-tail]\label{fishtaildef}
  If $\gamma : S^1\to S$ is an immersed curve then a {\em fish-tail} in $\gamma$ is a disk map
  $D^2\to S$ which maps the boundary of the disk to $\gamma$ and is non-singular
  everywhere except one point on $\ga$.
\end{defn}

Abouzaid's lemma states that for non-nulhomotopic curves fish-tails
represent the only obstruction to being unobstructed.

\begin{lemma}[Abouzaid]\label{abouzaidlem}
  A properly immersed smooth curve $\gamma$ is unobstructed if and only if
  it is not nulhomotopic and does not bound an immersed fish-tail.
\end{lemma}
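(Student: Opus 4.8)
The plan is to analyze the lift $\tilde\gamma\colon\RR\to\tilde S$ of $\gamma$ to the universal cover directly. Fix the covering $\tilde S\to S$; since $\chi<0$ the surface is hyperbolic, so $\tilde S\cong\RR^2$ and $\pi_1(S)$ is torsion free and acts freely and properly discontinuously by deck transformations. Write $g=[\gamma]\in\pi_1(S)$ and parametrize $\gamma$ on $\RR/\ZZ$, so that the lift satisfies $\tilde\gamma(t+1)=g\cdot\tilde\gamma(t)$. By definition $\gamma$ is \emph{unobstructed} exactly when $\tilde\gamma$ is a proper embedding, so the task is to match injectivity and properness of $\tilde\gamma$ with the two stated conditions.

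\textbf{Forward direction.} First I would prove the contrapositive of one implication: an unobstructed $\gamma$ satisfies both conditions. If $\gamma$ were nulhomotopic then $g=1$, so $\tilde\gamma(t+1)=\tilde\gamma(t)$ is periodic, hence neither injective nor proper. If $\gamma$ bounds a fish-tail, then the boundary of the fish-tail disk maps to a sub-loop of $\gamma$ that bounds a disk in $S$ and is therefore nulhomotopic; closing up at the self-crossing, this sub-loop $\gamma|_{[\bar a,\bar b]}$ lifts to a genuine loop, giving $\tilde\gamma(a)=\tilde\gamma(b)$, so $\tilde\gamma$ is not injective. In either case $\gamma$ is not unobstructed.

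\textbf{Reverse direction.} Properness is the easy half: since $\gamma$ is not nulhomotopic, $g$ has infinite order, and proper discontinuity of the cocompact deck action shows that any compact $K\subset\tilde S$ meets only finitely many translates $g^n\,\tilde\gamma([0,1])$, so $\tilde\gamma^{-1}(K)$ is compact. The substance is injectivity. Suppose $\tilde\gamma(s)=\tilde\gamma(t)$ for some $s<t$. Then $c:=\tilde\gamma|_{[s,t]}$ is a closed immersed curve in $\tilde S\cong\RR^2$ with finitely many transverse double points (general position on a compact interval). Minimizing $b-a$ over the finitely many pairs $a<b$ in $[s,t]$ with $\tilde\gamma(a)=\tilde\gamma(b)$ yields a \emph{simple} sub-loop $\tilde\gamma|_{[a,b]}$, which by Jordan--Schoenflies bounds an embedded disk $D\subset\tilde S$. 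Moreover $b-a\notin\ZZ$: otherwise $\tilde\gamma(a)=g^{\,b-a}\tilde\gamma(a)$ would let a power of $g$ fix a point, impossible for a free action by a torsion-free group; hence $\bar a\neq\bar b$ and the crossing is an honest self-intersection of $\gamma$. Projecting, $D\to\tilde S\to S$ is a disk map whose boundary covers $\gamma|_{[\bar a,\bar b]}$, which is an immersion on the interior (covering maps are local diffeomorphisms and $D$ is embedded) and is singular only at the corner lying over the crossing point. This is precisely a fish-tail in the sense of Definition \ref{fishtaildef}, contradicting the hypothesis. Thus $\tilde\gamma$ is injective, and with properness $\gamma$ is unobstructed.

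\textbf{Main obstacle.} I expect the reverse direction's injectivity step to be the crux: extracting an innermost \emph{simple} sub-loop from an arbitrary self-intersection of the lift and verifying that its projection meets the fish-tail definition exactly — in particular that the enclosed crossing is a genuine double point of $\gamma$ rather than an artifact of the $\ZZ$-periodicity of the lift, which is where torsion-freeness of $\pi_1(S)$ is essential. The forward direction and the properness argument are comparatively routine.
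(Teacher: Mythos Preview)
The paper does not supply a proof of this lemma: it is stated with attribution to Abouzaid and a citation to \cite{abouzaid}, and then used as a black box. So there is no ``paper's own proof'' to compare against.

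Your argument is the natural direct proof and is essentially correct. A couple of small remarks. First, you invoke $\chi<0$ to get hyperbolicity, but all you actually use is that $\tilde S\cong\RR^2$ and that $\pi_1(S)$ is torsion free and acts freely; this holds equally for the torus, so the restriction is unnecessary (and for $S^2$ the statement is vacuous). Second, in the injectivity step you should be slightly more explicit that minimizing $b-a$ really forces $\tilde\gamma|_{[a,b]}$ to be simple: any interior double point would furnish a strictly smaller pair. Finally, your claim that the projected Jordan disk is ``an immersion on the interior'' undersells it---the covering map is a local diffeomorphism on all of $D$, so the composite $D^2\to S$ is an immersion up to and including the boundary, with the single non-smooth point arising only from the corner in the boundary parametrization where $\tilde\gamma(a)=\tilde\gamma(b)$; that is exactly the lone singular point the fish-tail definition allows. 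With these cosmetic tweaks the proof stands.
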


Unobstructed curves are regular homotopy representatives of curves with no
unnecessary winding. The unobstructed loops are important in what follows because they will be our canonical
choice of regular representative within each free homotopy class.
More precisely, if $\hat{\pi}_{U,R} (S)$ denotes the set of regular free homotopy
classes of unobstructed curves on $S$ then we will show that there is a
bijection between the sets
$$\Phi : \hat{\pi}_{U,R} (S)\xto{\sim} \hat{\pi}(S)\backslash \{C\}$$
where $C$ denotes the free homotopy class of the contractible curve.  We
must remove $C$ from the righthand side since every regular representative
of the contractible free homotopy class $C$ is obstructed.

The first step in the construction of the grading will be to define the
grading on unobstructed curves in $\hat{\pi}_{U,R}(S)$. Since $C$ is contained in
the center of the Goldman Lie bracket, we can define the grading on the
Goldman Lie algebra modulo the center. Then the grading can be lifted by
choosing any regular homotopy representative of $C$ in $\pi_R(S)$. We will
choose the one in Figure \ref{fig:trivialrep}.

\begin{figure}[h]
\centering
\begin{center}
$$  \includegraphics[scale=0.75]{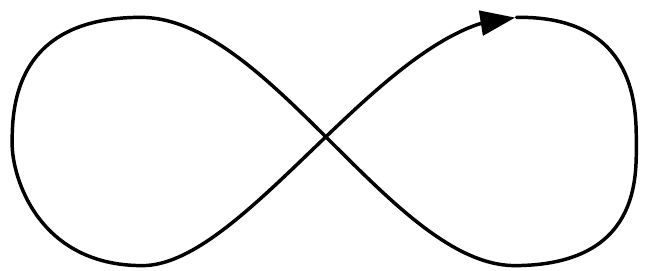}$$
\end{center}
  \caption{Our preferred choice of the regular representative for the free homotopy class $C$ of the contractible loop.}
  \label{fig:trivialrep}
\end{figure}

\section{Relations between free and regular homotopy}\label{constructionsec}
In this section we study the relationship between free and regular homotopy
classes of curves on a surface. In Section \ref{bundlemapssec}, we prove
that the map $s_* : \hat{\pi}_R(S) \to \hat{\pi}(S)$ which passes from
regular homotopy classes to free homotopy classes is a surjective, oriented
loop product preserving map. The fiber of $s_*$ over any free homotopy class
is identified with the $\ZZ$-orbit of any regular representative. In
\S\ref{bijectionsec}, a section $\Phi : \hat{\pi}(S) \to \hat{\pi}_{R}(S)$
is constructed by mapping free homotopy classes to unobstructed
representatives. This information is summarized by the diagram below.

\begin{equation*}\begin{tikzpicture}[scale=10, node distance=2.5cm]
\node (X) {$\hat{\pi}_R(S)$};
\node (Y) [below=1.5cm of X] {$\hat{\pi}(S)$};
\node (Z) [left=1cm of X] {$\ZZ\cdot \Phi$};
\node (W) [right=1.5cm of X] {};
\node (WW) [below=.5cm of W] {$\hat{\pi}_{U,R}(S) \cup \Phi(C)$};
\draw[->>,swap] (X) to node {$s_*$} (Y);
\draw[right hook->>,swap] (Y) to node  {$\Phi$} (WW);
\draw[right hook->] (Z) to node   {} (X);
\draw[left hook->] (WW) to node   {} (X);
\end{tikzpicture}
\end{equation*}

\subsection{Regular curves as a $\ZZ$-torsor}\label{bundlemapssec}
Here we show that the map $s_* : \hat{\pi}_R(S) \to \hat{\pi}(S)$, which
takes a free regular curve to its free homotopy class, is surjective
and preserves oriented loop products. The map $s_*$ commutes with a $\ZZ$-action on $\hat{\pi}_R(S)$ which is given by gluing fish-tails onto regular curves and there is $\ZZ$-equivariant isomorphism
from  the fiber over any free homotopy class to the $\ZZ$-orbit of any representative
$$\ZZ\cdot [\ga]_R \xto{\sim} s_*^{-1}([\ga]) \conj{ for } \ga\in[\ga].$$

\begin{defn}{($s_*$)}\label{sdef}
Composing the maps in Eqn. \eqref{smaleeqn} with the bundle map $s : S(TS)\to S$ defines a map
$$\pi_R(S,p) = \pi_0 (Imm_p(S^1,S))\cong \pi_1 (S(TS),p')\xto{\pi_1(s)} \pi_1(S,q),$$
which takes a based regular homotopy class of immersed curve
to its corresponding homotopy class.
This induces a map between sets of conjugacy classes
$$s_* : \hat{\pi}_R(S) \to \hat{\pi}(S).$$
This map is onto because every loop in $\pi_1(S,q)$ lifts to a loop in $\pi_1(S(TS),p')$; alternatively, $Imm_p(S^1,S)\subset Map(S^1,S)$ is dense. See also Lem. \ref{conjlemma}.
\end{defn}

\begin{defn}{($\ZZ$-action)}\label{zzactiondef}
There are maps $t, t^{-1} : \hat{\pi}_R(S)\to \hat{\pi}_R(S)$ which generate a $\ZZ$-action.
If $\ga$ is an immersed curve in $S$ then picking any point $p$ on $\ga$ we can add a small positive fish-tail at $p$ or a small negative fish-tail at $p$. In either case, this defines a new immersed curve which we call $t\ga$ or $t^{-1}\ga$. These operations are well-defined and independent of $p$ because we can both shrink a fish-tail to be arbitrarily small and drag it around the curve by regular homotopy.
The maps $t$ and $t^{-1}$ are illustrated by
$$\CPPic{borres1} \mapsto\quad\CPPic{pcorres1} \conj{ and } \CPPic{borres2} \mapsto\quad\CPPic{pcorres2}$$
respectively. These maps satisfy $tt^{-1} = 1_{\hat{\pi}_R(S)}$ and $t^{-1}t=1_{\hat{\pi}_R(S)}$ because, up to regular homotopy, a positive fish-tail can be cancelled with a negative fish-tail. One of the two cases is shown below.
$$\CPPic{dlip1} \hspace{.5in}\to \CPPic{dlip2}\hspace{.5in}\to \CPPic{dlip3}$$
The picture proof of the other case is obtained by reversing the orientation above.
So the maps $t$ and $t^{-1}$ define a $\ZZ \cong \inp{t}$ group action on $\hat{\pi}_R(S)$.

Since any additional fish-tails on a curve $\ga$ can be removed by homotopy, the $\ZZ$-action preserves the fiber 
$$s_*(t[\ga]_R) = s_*([\ga]_R)\conj{ and } s_*(t^{-1}[\ga]_R) = [\ga]_R.$$
Moreover, Reinhart's integral formula shows that this shifts the winding number, $\omega(t[\ga]_R) = \omega([\ga]_R) + 1$,
compare to Cor. \ref{fishtailpm1}. 
\end{defn}

\begin{remark}\label{smaleisormk}
By virtue of Smale's isomorphism $\phi : \hat{\pi}_R(S)\xto{\sim} \hat{\pi}(S(TS))$. There is an alternative way to define the $\ZZ$-action. Recall from Eqn. \eqref{spres} that there is a central element  $f \in \pi_1(S(TS),p')$ representing the fiber of the unit tangent bundle. If $[\ga]_R\in\hat{\pi}_R(S)$ then 
$$\phi(t[\ga]_R) = f\phi([\ga]_R) \in \hat{\pi}(S(TS))$$
so that multiplication by $f$
determines the action of $t$ on the set $\hat{\pi}_R(S)$. This can be seen directly by observing that a curve $t\ga$ with fish-tail in $Map(S^1,S(TS))$ is homotopic to the curve $\ga$ in $S$ with section vectors forming a positive full twist.
\end{remark}

\begin{defn}
The category of pointed sets and pointed set maps is the undercategory $Set_* := \{pt\}/Set$.
In more detail, an object is a pair $(A,a)$ consisting of a set $A$ and an element $a\in A$. A map $f : (A,a) \to (B,b)$ between two such pairs is a set map $f : A \to B$ which satisfies $f(a) = b$. A sequence of such maps
$$(A,a) \xto{f} (B,b) \xto{g} (C,c)$$
is {\em exact} when $g^{-1}(c) = im(f)$.  There is a terminal object $1 := (\{pt\},pt)$.
\end{defn}

The next lemma shows that passing from groups to conjugacy classes of
elements in groups is a functor from groups to pointed sets. The important
part of the lemma shows that a central inclusion of groups is mapped to an
injective map of pointed sets. So the functor preserves the exactness of
central extensions in a strong sense.

\begin{lemma}\label{conjlemma}
Taking conjugacy classes determines a functor $\hat{\cdot} : Grp \to Set_*$ from groups to pointed sets. In particular, a homomorphism $a : K \to G$ of groups induces a map $\hat{a} : \hat{K} \to \hat{G}$ from conjugacy classes in $K$ to conjugacy classes in $G$. This functor satisfies the three properties below.

\begin{enumerate}
\item A short exact sequence of groups induces a short exact sequence of pointed sets.
\item If $b : G \to H$ is an epimorphism then the set map $\hat{b}$ is surjective.
\item If $a : K \to G$ is a monomorphism and central, i.e.  $im(a) \subset Z(G)$, then the set map $\hat{a}$ is injective. 
\end{enumerate}
\end{lemma}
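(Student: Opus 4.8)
The plan is to set up the functor first and then dispatch the three numbered properties in the order (2), (3), (1), since (2) feeds into the proof of (1). For the functor, let $\hat{G}$ denote the set of conjugacy classes of $G$, pointed by the class $[e]$ of the identity. Given a homomorphism $a : K \to G$, the identity $a(xkx^{-1}) = a(x)\,a(k)\,a(x)^{-1}$ shows that $a$ carries conjugate elements to conjugate elements, so $k \mapsto [a(k)]$ descends to a well-defined map $\hat{a} : \hat{K} \to \hat{G}$; since $a(e)=e$ this is a pointed map. Functoriality, $\widehat{b\circ a} = \hat{b}\circ\hat{a}$ and $\hat{1_G}=1_{\hat{G}}$, is immediate from the formula $\hat{a}([k]) = [a(k)]$.

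Property (2) is the easiest: given $[h]\in\hat{H}$, pick a representative $h$ and, by surjectivity of $b$, an element $g\in G$ with $b(g)=h$; then $\hat{b}([g])=[h]$, so $\hat{b}$ is onto. For property (3), suppose $\hat{a}([k_1])=\hat{a}([k_2])$, so that $a(k_2)=g\,a(k_1)\,g^{-1}$ for some $g\in G$. Because $a(k_1)\in\operatorname{im}(a)\subset Z(G)$ is central, the right-hand side collapses to $a(k_1)$, whence $a(k_2)=a(k_1)$, and injectivity of $a$ then forces $k_1=k_2$. This is the step where centrality is indispensable, and it is the conceptual crux of the lemma: without it, conjugate images in $G$ need not come from conjugate elements of $K$, and $\hat{a}$ can fail to be injective.

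Finally, for property (1), begin with a short exact sequence $1 \to K \xto{a} G \xto{b} H \to 1$. Exactness at $\hat{H}$ is surjectivity of $\hat{b}$, which is property (2). Exactness at $\hat{K}$ reduces to $\hat{a}^{-1}([e])=\{[e]\}$: if $a(k)$ is conjugate to $e$ then $a(k)=e$, since $e$ is the only element conjugate to itself, and injectivity of $a$ gives $k=e$. The only point needing care is exactness at $\hat{G}$, namely $\hat{b}^{-1}([e])=\operatorname{im}(\hat{a})$. The inclusion $\operatorname{im}(\hat{a})\subseteq\hat{b}^{-1}([e])$ follows from $b\circ a = 1$. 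For the reverse inclusion, observe that a class $[g]$ lies in $\hat{b}^{-1}([e])$ iff $b(g)$ is conjugate to $e$, i.e. $b(g)=e$, i.e. $g\in\ker b=\operatorname{im}(a)$; writing $g=a(k)$ then gives $[g]=\hat{a}([k])$. Here one uses that $\ker b=\operatorname{im}(a)$ is normal, so the condition ``$g\in\operatorname{im}(a)$'' is conjugation-invariant and hence well-posed on classes. The main obstacle, such as it is, is purely bookkeeping: repeatedly invoking that the only element conjugate to the identity is the identity, and keeping the distinction between equality of group elements and equality of conjugacy classes straight throughout.
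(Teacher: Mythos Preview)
Your proof is correct and follows essentially the same approach as the paper: the functor is set up identically, and each of the three properties is verified by the same elementary arguments (lifting along a surjection for (2), using centrality to collapse a conjugation for (3), and the ``only the identity is conjugate to the identity'' observation repeatedly for (1)). The only cosmetic difference is that you prove the properties in the order (2), (3), (1) rather than (1), (2), (3), and you add the helpful remark that $\ker b=\operatorname{im}(a)$ being normal makes the condition conjugation-invariant.
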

  \begin{proof}
If $a : K \to G$ then set $\hat{a}([k]) := [a(k)]$ if $k\sim k'$ so $k = rk'r^{-1}$ then $a([k]) = [a(rk'r^{-1})] = [a(r)a(k')a(r)^{-1})] = [a(k')]$. This is a functor, if $a : K \to G$ and $b : G \to H$ then $\hat{b}(\hat{a}([k])) = \hat{b}([a(k)]) = [ba(k)] = \widehat{ba}([k])$.

{\it 1.} Suppose $1 \to K \xto{a} G \xto{b} H \xto{p} 1$ is a short exact sequence of groups. Then there is a sequence of maps 
$$ 1 \to \hat{K} \xto{\hat{a}} \hat{G} \xto{\hat{b}} \hat{H} \xto{\hat{p}} 1$$
First we show that exactness holds on the left. If $[k] \in \hat{a}^{-1}([1])$ then $[a(k)] = [1]$ implies $a(k) = r1r^{-1} = 1$ for some $r\in G$. So $k\in ker(a) = \{1\}$. Therefore, $\hat{a}^{-1}([1]) = [1]$.

Exactness hold on the right because, for all $[h] \in \hat{p}^{-1}([1])$, there is a $g\in G$ such that $b(g) = h$ by surjectivity of $b$. So $\hat{b}([g]) = [b(g)] = [h]$. This is a special case of {\it 2.} below.

Now we will show that $im(\hat{a}) = \hat{b}^{-1}([1_H])$. First $\hat{b}\hat{a}[k] = [b(a(k))]= [1_H]$ implies $im(\hat{a}) \subset \hat{b}^{-1}([1_H])$. Suppose $[g]\in \hat{b}^{-1}([1_H])$, so $\hat{b}([g]) = [1_H]$ then there is $h\in H$ such that $hb(g)h^{-1} = 1_H$ or $b(g) = 1_H$, so $g\in ker(b) = im(a)$ and there exists $k\in K$ such that $a(k) = g$, it follows that $\hat{a}([k]) = [g]$. Thus $\hat{b}^{-1}([1_H]) \subset im(\hat{a})$, so $im(\hat{a}) = \hat{b}^{-1}([1_H])$.

{\it 2.} Assume that $b : G\to H$ is a surjective homomorphism. If $[h]\in \hat{H}$ then for any $h\in [h]$, there is a $g\in G$ such that $b(g) = h$. So $\hat{b}([g]) = [b(g)] = [h]$. 

{\it 3.} Assume that $a : K\to G$ is an injective homomorphism and $im(a) \subset Z(G)$. If $\hat{a}([k]) = \hat{a}([k'])$ then $[a(k)] = [a(k')]$. So there is an $r\in G$ such that $a(k) = ra(k') r^{-1}$, but $a(k') \in im(a) \subset Z(G)$ implies $a(k) = a(k')rr^{-1} = a(k')$, so $k=k'$ by injectivity of $a$.
\end{proof}

\begin{prop}\label{eqprop}
  Suppose that $[\tau]_R \in s_*^{-1}([\ga])$ is any curve in the fiber of $s_*$ over $[\ga]\in\hat{\pi}(S)$. Then there is a $\ZZ$-equivariant bijection
  $$\kappa : \ZZ\cdot [\tau]_R \xto{\sim} s_*^{-1}([\ga])$$
from the $\ZZ$-orbit of $[\tau]_R$ to the fiber.
  \end{prop}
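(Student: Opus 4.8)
The plan is to transport the entire question into the fundamental group of the unit tangent bundle using Smale's isomorphism $\phi : \hat{\pi}_R(S)\xrightarrow{\sim}\hat{\pi}(S(TS))$ of Remark~\ref{smaleisormk}. Under $\phi$ the map $s_*$ becomes the map $\widehat{\pi_1(s)}:\hat{\pi}(S(TS))\to\hat{\pi}(S)$ on conjugacy classes induced by the bundle projection, and the generator $t$ of the $\ZZ$-action becomes multiplication by the central fiber class $f$, so that $\phi(t^n[\tau]_R)=[f^n\phi(\tau)]$. From this point of view $\kappa$ is simply the inclusion of the $\ZZ$-orbit $\ZZ\cdot[\tau]_R$ into the fiber $s_*^{-1}([\ga])$: its injectivity and its $\ZZ$-equivariance are then formal, and the assertion that it is a bijection reduces to the single statement that the orbit exhausts the fiber (transitivity). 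For the fiber to be a genuine $\ZZ$-torsor, as the section title advertises, one needs in addition that the $t$-action is \emph{free}, and this is the step I expect to carry the real weight.

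For transitivity I would use the central extension read off from the presentation \eqref{spres},
$$ 1 \longrightarrow \inp{f} \longrightarrow \pi_1(S(TS)) \xrightarrow{\pi_1(s)} \pi_1(S) \longrightarrow 1, $$
in which $\inp{f}\cong\ZZ$ is central. Write $g=\phi(\tau)$ and let $[\sigma]_R$ be any class in $s_*^{-1}([\ga])$, so that $\pi_1(s)(\phi(\sigma))$ is conjugate to $\pi_1(s)(g)$ in $\pi_1(S)$. Choosing a conjugating element and lifting it to $y\in\pi_1(S(TS))$, the elements $\phi(\sigma)$ and $ygy^{-1}$ have the same image under $\pi_1(s)$ and hence differ by an element of the kernel $\inp{f}$; centrality of $f$ then gives $\phi(\sigma)=y(f^n g)y^{-1}$ for some $n$, whence $[\sigma]_R=t^n[\tau]_R$ lies in the orbit. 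This uses only that $\pi_1(s)$ is surjective (to lift the conjugator) and that its kernel is the central subgroup $\inp{f}$, both immediate from \eqref{spres}; Lemma~\ref{conjlemma} provides the functorial framework for these conjugacy-class manipulations.

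Freeness, that is $t^n[\tau]_R=[\tau]_R\Rightarrow n=0$, is the subtle point and I expect it to be the main obstacle, precisely because this is where the $\ZZ/\chi$-valued winding number is insufficient: it can only separate the orbit into $\vnp{\chi}$ classes and so cannot by itself see that the orbit is infinite. Unwinding, $f^n g\sim g$ means $f^n g=xgx^{-1}$ for some $x$. When $[\ga]\neq C$, projecting to $\pi_1(S)$ shows that $\pi_1(s)(x)$ centralizes the nontrivial element $\pi_1(s)(g)$; using that centralizers of nontrivial elements in a closed surface group with $\chi<0$ are infinite cyclic, one obtains a common root whose lift commutes with both $g$ and $f$, so $xgx^{-1}=g$ and therefore $f^n=1$. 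Since $f$ is the Seifert fiber class it has infinite order in $\pi_1(S(TS))$, forcing $n=0$. The degenerate case $[\ga]=C$ is easier: then $g\in\inp{f}$ is itself central, its conjugacy class is a singleton, and $f^n g=g$ again forces $n=0$ by the infinite order of $f$.

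Combining the two steps, $\kappa$ is a $\ZZ$-equivariant bijection from the free orbit $\ZZ\cdot[\tau]_R$ onto $s_*^{-1}([\ga])$, exhibiting every fiber as a $\ZZ$-torsor. The two structural inputs doing the genuine work are that $\ker\pi_1(s)$ is central (which yields transitivity) and that $f$ has infinite order while centralizers in $\pi_1(S)$ are cyclic (which yields freeness); everything else is formal bookkeeping with conjugacy classes.
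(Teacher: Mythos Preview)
Your proposal is correct and follows essentially the same route as the paper: both transport the problem to $\pi_1(S(TS))$ via Smale's isomorphism, obtain transitivity from the central short exact sequence $1\to\inp{f}\to\pi_1(S(TS))\to\pi_1(S)\to1$, and establish freeness by invoking that centralizers of nontrivial elements in a hyperbolic surface group are cyclic (so that the conjugator and $g$ share a root, forcing $f^n=1$ and hence $n=0$). Your bookkeeping for transitivity---lifting the conjugator explicitly and using centrality of $f$---is slightly cleaner than the paper's, and you treat the contractible case $[\ga]=C$ more explicitly, but the substance is identical.
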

\begin{proof}
This is because the kernel of $s_*$ is $\inp{f}\cong \ZZ$ where $f$ is the fiber.
In more detail,  there is a diagram
\begin{equation*}\begin{tikzpicture}[scale=10, node distance=2.5cm]
\node (A) {$1$};
\node (B) [right=2cm of A] {$\pi_1(S^1)$};
\node (C) [right=2cm of B] {$\pi_1(S(TS))$};
\node (D) [right=2cm of C] {$\pi_1(S)$};
\node (E) [right=2cm of D] {$1$};
\node (Y) [below=.75cm of C] {};
\node (X) [right=.75cm of Y] {$\pi_R(S)$};

\node (B2) [below=2cm of B] {$\hat{\pi}(S^1)$};
\node (C2) [below=2cm of C] {$\hat{\pi}(S(TS))$};
\node (D2) [below=2cm of D] {$\hat{\pi}(S)$};
\node (A2) [left=2cm of B2] {$1$};
\node (E2) [right=2cm of D2] {$1$};
\node (X2) [below=2cm of X] {$\hat{\pi}_R(S)$};

\draw[->] (X) to node {$\phi_1$} (C);
\draw[->] (X) to node [swap] {} (D);

\draw[->] (X) to node {} (X2);

\draw[->] (X2) to node {$\phi$} (C2);
\draw[->] (X2) to node [swap] {$s_*$} (D2);

\draw[->] (B) to node {} (B2);
\draw[->] (C) to node [swap] {$z$} (C2);
\draw[->] (D) to node {} (D2);

\draw[->] (A) to node {} (B);
\draw[->] (B) to node {} (C);
\draw[->] (C) to node {$\pi_1(s)$} (D);
\draw[->] (D) to node {} (E);

\draw[->] (A2) to node {} (B2);
\draw[->] (B2) to node {$\hat{\pi}(i)$} (C2);
\draw[->] (C2) to node {$\hat{\pi}(s)$} (D2);
\draw[->] (D2) to node {} (E2);
\end{tikzpicture}
\end{equation*}

The long exact sequence of homotopy groups associated to the circle bundle
$S^1 \xto{i} S(TS) \xto{s} S$ gives a short exact sequence among fundamental
groups because $S^1$ and $S$ are both $K(G,1)$-spaces. This gives the first row. The second row 
comes from Lem. \ref{conjlemma} above. The map $\phi$ is the Smale isomorphism and the
bottom triangle commutes because $s_* = \hat{\pi}(s)\circ\phi$. There is an isomorphism
$\pi_1(S^1) \cong \hat{\pi}(S^1) = \inp{f}\cong \ZZ$ is generated by the fiber element in the
Seifert presentation of $\pi_1(S(TS))$, see Eqn. \eqref{spres}.

  The map $\kappa$ is determined by $\ZZ$-equivariance and the assignment $0\cdot[\tau]_R \mapsto [\tau]_R$.
The two statements below are equivalent to injectivity and surjectivity of $\kappa$ respectively.
  \begin{enumerate}
    \item if $[x]_R\in s_*^{-1}([\ga])$ then $[x]_R= t^n[\tau]_R$ in $\pi_R(S,p)$ for some $n\in \ZZ$.
  \item $t^n[\tau]_R \not\simeq [\tau]_R$ for all $n\in \ZZ\backslash\{0\}$ and 
\end{enumerate}

For the first statement, fix a basepoint $p = (x(\theta_0), \dot{x}(\theta_0))$ on $x$ for an $x\in [x]_R$ and $\theta_0\in S^1$. In doing so $\phi[x]_R$ lifts trivially to $[x]_{\pi_1} \in z^{-1}(\phi[x]_R)$. Since $[x]_R\in s_*^{-1}([\ga])$, $\phi[x]_R \in \hat{\pi}(s)^{-1}([\ga])$ and $[x]_{\pi_1} \in \pi_1(s)^{-1}[\ga]$. But $\pi_1(s)^{-1}[\ga] = [\ga] ker(\pi_1(s)) = [\ga]\inp{f}$. So $[x]_{\pi_1} = f^n [\ga]$ for some $n\in\ZZ$. The rest follows from Rmk. \ref{smaleisormk}.

  The second statement follows from the injectivity of $\hat{\pi}(i)$ and centrality of $f\in \pi_1(S(TS),p')$. Here is a detailed argument. By Rmk. \ref{smaleisormk} we may identify the action of $t$ with multiplication by the fiber element $f$.
  Notice that if the genus of $S$ is one or $[\tau]_R = f^NC$ is homotopically trivial then the statement is trivial. So assume that the genus is at least two,  $\tau$ is homotopically non-trivial and the conjugacy classes 
  $f^n[\tau]_R = [\tau]$ are equal in $\hat{\pi}(S(TS))$ for some $n\in \ZZ$. There is a curve $\a$ such that  $f^n\tau = \a\tau\a^{-1}$ or $f^n = \a\tau\a^{-1}\tau^{-1}$. Applying the map $s':=\pi_1(s)$ gives $1 = s'(\a\tau\a^{-1}\tau^{-1})$ in $\pi_1(S)$. In particular, $s'(\a) \in Cent_{\pi_1}(s'(\tau))$. The centralizer of a non-trivial element $s'(\tau)\in \pi_1(S)$ is cyclic $\inp{\b}$ when $S$ is hyperbolic, which is true when the genus is greater than two, see \cite[\S 1.1.3]{fm}. Every element $s'(\tau)$ is contained in it's own centralizer. So
  $$s'(\a), s'(\tau) \in Cent_{\pi_1}(s'(\tau))=\inp{\b}$$
and $s'(\a) = \b^q$ and $s'(\tau) = \b^m$
  for some $q, m\in \ZZ$ and for some $\b \in \pi_1(S)$. By exactness of the first row, there are integers $p,\ell\in\ZZ$ such that $\a = f^p \b^q$ and $\tau = f^\ell \b^m$. Combining these observations gives
\begin{align*}
  f^n &= \a\tau\a^{-1}\tau^{-1}\\
  &= f^p\b^{q} f^\ell b^m f^{-p} \b^{-q} f^{-\ell} b^{-m}\\
  &= \b^{q} b^m \b^{-q} b^{-m}\\
  &= 1
\end{align*}
in $\pi_1(S(TS))$, which contradicts injectivity of the fiber map $\pi_1(i)$.

\end{proof}

\begin{rmk} The map $\kappa$ depends on the choice $\tau$. The section $\Phi$ in \S\ref{bijectionsec} will give a canonical choice of $\tau$. \end{rmk}

\begin{prop}\label{orloopprop}
  The map $s_*$ preserves the oriented loop product
$$s_*([\a\cdot_p\b]_R) = s_*([\a]_R) \cdot_q s_*([\b]_R).$$
  \end{prop}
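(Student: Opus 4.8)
The plan is to reduce the statement to the fact that $\pi_1(s)$ is a group homomorphism, using the factorization $s_* = \hat{\pi}(s)\circ\phi$ recorded in Definition \ref{sdef} and Proposition \ref{eqprop}, together with the observation that both loop products are, by Definition \ref{loopproddef}, nothing more than multiplication in the appropriate fundamental group followed by passage to conjugacy classes. First I would choose regular representatives $\a,\b$ crossing transversally at a point $p\in S$, so that $\a\cdot_p\b$ is again an immersed curve and $[\a\cdot_p\b]_{\pi_1}=[\a]_{\pi_1}[\b]_{\pi_1}$ in $\pi_1(S,p)$. Writing $q=s(p'_\a)=p$ for the common image of the crossing, the right-hand side $s_*([\a]_R)\cdot_q s_*([\b]_R)$ is by definition the conjugacy class of $\pi_1(s)\phi_1([\a]_R)\cdot\pi_1(s)\phi_1([\b]_R)$ in $\pi_1(S,q)$.

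Next I would identify the regular loop product with multiplication in $\pi_1(S(TS))$ under Smale's isomorphism. The one genuine subtlety is that the tangent directions $\dot\a(p)$ and $\dot\b(p)$ differ, so the Smale lifts $\phi_1([\a]_R)$ and $\phi_1([\b]_R)$ are based at two distinct points $p'_\a=(p,\dot\a/\vnp{\dot\a})$ and $p'_\b=(p,\dot\b/\vnp{\dot\b})$ of the fiber over $p$. To multiply them inside a single group $\pi_1(S(TS),p'_\a)$ one must first slide $p'_\b$ to $p'_\a$ along the fiber, and the smoothing of the corner of $\a\cdot_p\b$ in the neighborhood $U$ of $p$ contributes a further rotation of the framing. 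Both corrections lie in the central subgroup $\inp{f}$ generated by the fiber class, so in $\pi_1(S(TS),p'_\a)$ one obtains $\widetilde{\a\cdot_p\b}=f^{\,k}\,\phi_1([\a]_R)\,\phi_1([\b]_R)$ for some integer $k$ recording this framing data.

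The key step, which I expect to be the main obstacle, is exactly the bookkeeping of this framing correction $f^{\,k}$: one must verify that the discrepancy between the based regular product and the honest product of the two lifts is entirely a power of the central fiber class. Granting this, the conclusion is immediate, since applying the homomorphism $\pi_1(s)$ kills $f$ because $\inp{f}=\ker\pi_1(s)$ by the first row of the diagram in Proposition \ref{eqprop}; thus
$$\pi_1(s)\big(\widetilde{\a\cdot_p\b}\big)=\pi_1(s)\phi_1([\a]_R)\cdot\pi_1(s)\phi_1([\b]_R)=[\a]_{\pi_1}[\b]_{\pi_1}=[\a\cdot_p\b]_{\pi_1}$$
in $\pi_1(S,q)$. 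Passing to conjugacy classes through the functor $\hat{\cdot}$ of Lemma \ref{conjlemma} then yields $s_*([\a\cdot_p\b]_R)=s_*([\a]_R)\cdot_q s_*([\b]_R)$, as desired.

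Finally, I would note a framing-free route that sidesteps the computation of $k$ altogether: by Definition \ref{sdef} the map $s_*$ is simply the map forgetting that a curve is immersed, induced by the inclusion $Imm(S^1,S)\subset Map(S^1,S)$. Under this description the underlying continuous map of the immersed curve $\a\cdot_p\b$ is literally the loop-product curve in $S$, whence $s_*([\a\cdot_p\b]_R)=[\a\cdot_p\b]=[\a]\cdot_p[\b]$ directly from Definition \ref{loopproddef}. I would likely present the homomorphism argument as the main proof and record this forgetful observation as the conceptual reason the framing corrections must vanish.
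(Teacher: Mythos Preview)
Your proposal is correct and follows the same essential idea as the paper's proof, which is the one-line observation that $s_*$ is induced by the group homomorphism $\pi_1(s):\pi_1(S(TS),p')\to\pi_1(S,q)$ and hence preserves products. The paper does not engage with the basepoint/framing subtlety you carefully work through; your ``framing-free'' route at the end---that $s_*$ simply forgets the immersion structure, so the underlying map of $\a\cdot_p\b$ is literally the loop product in $S$---is the closest match to the spirit of the paper's argument.
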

\begin{proof}
  Since $s_*$ is induced by a map $\pi_1(S(TS),p')\to \pi_1(S,q)$ between
fundamental groups, it must preserve the product at any basepoint $p$. 
\end{proof}

\subsection{The $s_*$-section $\Phi$}\label{bijectionsec}

Here we define a section $\Phi : \hat{\pi}(S)\to \hat{\pi}_R(S)$ of the map $s_* : \hat{\pi}_R(S) \to \hat{\pi}(S)$. The map $\Phi$ is injective and its image is the set 
$$\im(\Phi) = \hat{\pi}_{U,R}(S)\cup \Phi(C)$$
of unobstructed regular curves in $S$ together with a choice $\Phi(C)$ of regular representative for the contractible curve (see Fig. \ref{fig:trivialrep}). 
These properties are established in Proposition \ref{correspprop}.  
The proof of this proposition uses Lemma \ref{existencelemma}
which shows that every free homotopy class contains an unobstructed
representative and Lemma \ref{uniquenesslemma} which shows that any two
unobstructed representatives of the same free homotopy class are freely
regularly homotopic.

\begin{lemma}\label{existencelemma}
Let $\alpha:S^1\to S$ be a non-nulhomotopic loop in $S$ and let $[\alpha]\in \hat{\pi}(S)\backslash\{ C\}$ denote its free homotopy class. Then there is a representative $\bar{\alpha}\in [\alpha]$ such that $\bar{\alpha}$ is unobstructed.
\end{lemma}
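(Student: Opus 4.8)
The plan is to produce the required representative geometrically, as a closed geodesic, and to read off unobstructedness \emph{directly} from the lifting definition rather than from Abouzaid's combinatorial criterion. Since $\pi_1(S)$ is trivial in genus $0$, the hypothesis is vacuous there, so I may assume the genus is at least one. First I would equip $S$ with a complete metric of constant curvature: a hyperbolic metric when the genus is at least two, so that $\tilde{S}\cong\HH$, and a flat metric when $S$ is a torus, so that $\tilde{S}\cong\RR^2$. In either case $\pi_1(S)$ acts on $\tilde{S}$ by orientation-preserving isometries with quotient $S$.

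Next I would fix the nontrivial element $g=[\alpha]_{\pi_1}\in\pi_1(S)$ determined by $[\alpha]$; this is only well defined up to conjugacy, which is harmless since the conclusion concerns the free homotopy class. The isometry $g$ is hyperbolic (respectively a nontrivial translation), so it preserves a unique geodesic line $L\subset\tilde{S}$, its axis, acting on $L$ as translation by the translation length. I would then take $\bar{\alpha}$ to be the projection of $L$ to $S$, parametrized so that one fundamental domain of the $\inp{g}$-action on $L$ is traversed once: this is a smooth closed geodesic, hence an immersion, and it represents $[\alpha]$ by construction. It remains to check unobstructedness, i.e. that the lift $\widetilde{\bar\alpha}\colon\RR\to\tilde{S}$ is a properly embedded path. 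By construction $\widetilde{\bar\alpha}$ is a monotone, constant-speed parametrization of the line $L$, hence injective; and since $L$ is a closed subset of $\tilde{S}$ homeomorphic to $\RR$ via this parametrization, the map is proper. Thus $\bar{\alpha}$ lifts to a properly embedded path and is unobstructed.

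The one point that requires care — and which I expect to be the main obstacle — is the non-primitive case $g=h^{n}$ with $n>1$, where $\bar{\alpha}$ is a multiply covered geodesic and therefore fails to be in general position as a map $S^1\to S$. Here the argument above still goes through because $g$ and its primitive root $h$ share the \emph{same} axis $L$, so $\widetilde{\bar\alpha}$ is merely an $n$-fold faster monotone parametrization of the single line $L$ and remains an injective proper map upstairs, even though the curve self-overlaps downstairs. This is precisely why I would verify unobstructedness straight from the lifting definition rather than via Abouzaid's Lemma~\ref{abouzaidlem}: the notion of a bounded immersed fish-tail is delicate for a curve that is not in general position, and the lifting criterion is insensitive to such degeneracies.

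As a remark, for primitive classes one could instead invoke Lemma~\ref{abouzaidlem} together with Gauss--Bonnet: an immersed fish-tail bounded by $\bar{\alpha}$ would exhibit a nulhomotopic geodesic monogon, and on a nonpositively curved surface the Gauss--Bonnet formula forces its single exterior angle to be at least $2\pi$, which is impossible. This shows $\bar{\alpha}$ bounds no fish-tail and, being non-nulhomotopic by hypothesis, is unobstructed. I would keep the geodesic-lift argument as the main line of proof, however, since it dispatches the primitive and non-primitive cases uniformly without any general-position or termination bookkeeping.
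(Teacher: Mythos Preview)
Your proof is correct and takes a genuinely different route from the paper. The paper argues combinatorially via Abouzaid's Lemma~\ref{abouzaidlem}: it starts from a normal representative, contracts the fish-tails one by one (innermost to outermost) by a homotopy supported near each bounding disk, and then invokes the lemma again to conclude that the resulting curve is unobstructed. You instead equip $S$ with a constant-curvature metric, take the closed geodesic in the given free homotopy class, and verify unobstructedness directly from the lifting definition by noting that the lift parametrizes the axis of the relevant deck transformation. Your approach is cleaner in that it avoids any general-position or termination bookkeeping and treats non-primitive classes uniformly, at the modest cost of importing a geometric structure (uniformization). One cosmetic slip: in the torus case a nontrivial translation of $\RR^2$ does not preserve a \emph{unique} geodesic line but rather every line in its direction; this does not affect the argument, since any such line serves as your $L$.
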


\begin{proof} 
  Let $[\alpha]\in \hat{\pi}(S)\backslash \{C\}$ with $\alpha$ a normal
  representative. Assume by contradiction that $\alpha$ fails to be
  unobstructed. Since $\a$ is not nulhomotopic, Abouzaid's Lemma \ref{abouzaidlem} implies that
  the image of $\alpha$ bounds a finite (by normality) number of fish-tails. By Definition \ref{fishtaildef} of fish-tail, there must be a disk through which any fish-tail
can be contracted. Such disks determine a smooth convex homotopy
  $F:S^1 \times [0,1] \to S$ with support in a contractible neighborhood of
  the fish-tails that straightens them from innermost to
  outermost into an immersed embedded line segment. 
  Thus the resulting loop $\bar{\alpha}$ representing $[\alpha]$ obtained
  via $F$ is regular, non-trivial, has no fish-tails, and is unobstructed, again,
  by Abouzaid's Lemma \ref{abouzaidlem}.
\end{proof}

The lemma above establishes the existence of unobstructed representatives of
curves within each free homotopy class. The next lemma will show that there is a
unique choice up to regular homotopy. We will use a relative version of Epstein's
Theorem which we now recall.

\begin{thm}{(\cite{Epstein})}\label{epstein}
Let $S$ be a surface with boundary and $\a$, $\b$ two embedded arcs with endpoints that are equal and contained in the boundary of $S$.  If $\a$ and $\b$ are homotopic with endpoints fixed then $\a$ and $\b$ are isotopic with endpoints fixed.
  \end{thm}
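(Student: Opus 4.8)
The plan is to reduce the statement to the elimination of interior intersection points by an isotopy fixing the two common endpoints, and then to finish using the disk cobounded by two otherwise-disjoint arcs with the same endpoints.

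First I would place $\a$ and $\b$ in general position, so that away from the shared endpoints on $\partial S$ they meet transversally in a finite set of interior points; after a small isotopy rel endpoints I may also assume that $\a$ and $\b$ coincide inside a collar of $\partial S$ near each endpoint, so that all intersections and all subsequent modifications take place in the interior. If there are no interior intersections, then $\a\cup\b$ is an embedded closed curve which is null-homotopic, since $\a$ and $\b$ are homotopic rel endpoints; and a null-homotopic embedded loop on a surface with contractible universal cover bounds an embedded disk $D\subset S$. Sweeping $\a$ across $D$ onto $\b$ then yields the required isotopy rel endpoints.

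The core of the argument is to reduce the number of interior intersections by an innermost-bigon argument carried out in the universal cover $p:\tilde S\to S$. Since $S$ has nonempty boundary, $\tilde S$ is a simply connected surface with boundary and is therefore planar. Choosing a common lift of the starting endpoint produces embedded lifts $\tilde\a,\tilde\b$ which, because $\a\simeq\b$ rel endpoints, share both of their endpoints. Two embedded arcs in a simply connected planar surface with the same endpoints that cross in their interiors must cobound an innermost bigon $\tilde B$: travelling along $\tilde\a$, the first pair of consecutive intersection points with $\tilde\b$ bounds a subdisk whose interior meets neither $\tilde\a$ nor $\tilde\b$. Using that $\a$ and $\b$ are embedded, together with the innermost condition, $p$ restricts to an embedding on $\tilde B$, so $B=p(\tilde B)$ is an embedded bigon in the interior of $S$. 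An isotopy of $\a$ supported in a neighborhood of $B$ that pushes the relevant subarc of $\a$ across $B$ removes two intersection points while fixing the endpoints. Iterating on the finite interior intersection number drives it to zero, after which the previous paragraph concludes the proof.

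The step I expect to be the main obstacle is this bigon elimination: one must verify that the innermost bigon $\tilde B$ upstairs projects to an honestly embedded disk $B$ downstairs, which is exactly where the embeddedness of $\a$ and $\b$ is used, and one must confirm that the supporting isotopy can be localized away from $\partial S$ so that the two shared endpoints remain fixed throughout. The collar normalization in the first step is what makes this endpoint bookkeeping clean, ensuring that neither the bigons nor the eliminating isotopies ever disturb $\partial S$ except at the endpoints themselves.
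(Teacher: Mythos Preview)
The paper does not supply a proof of this theorem at all: it is quoted as a result of Epstein and used as a black box in the proof of Lemma~\ref{uniquenesslemma}. So there is no ``paper's own proof'' to compare against; what remains is to assess your outline on its own merits.

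Your strategy---general position, bigon elimination in the universal cover, then sweep across the final disk---is exactly the standard route to Epstein's theorem. You also correctly identify the crux: showing that an innermost bigon upstairs descends to an embedded bigon in $S$. However, the justification you give for that step is not sufficient. Knowing that $\a$ and $\b$ are embedded only tells you that each \emph{individual} lift is embedded; it does not prevent a nontrivial deck transformation $g$ from carrying one point of your bigon $\tilde B$ to another, which is precisely what would make $p\vert_{\tilde B}$ non-injective. Your innermost condition is taken only with respect to the two chosen lifts $\tilde\a,\tilde\b$, whereas what controls the projection is innermost-ness with respect to the \emph{entire} preimages $p^{-1}(\a)$ and $p^{-1}(\b)$. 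The usual fix is to work with all lifts simultaneously: the full preimages form two locally finite families of disjoint embedded arcs, and an innermost bigon among all of them has interior meeting no translate of itself, hence embeds under $p$. A secondary wrinkle: ``the first pair of consecutive intersection points along $\tilde\a$'' need not bound a disk whose interior misses $\tilde\b$; one should instead take a minimal-area (or combinatorially innermost) complementary region and argue it is a bigon.
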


We are now ready to prove our lemma.

\begin{lemma}\label{uniquenesslemma} 
If two unobstructed curves $\alpha$ and $\beta$ are freely homotopic then they are freely regularly homotopic.
\end{lemma}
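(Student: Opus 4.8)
The plan is to reduce the statement to the classification of essential simple closed curves in an annulus by passing to the appropriate cyclic cover, where the unobstructedness hypothesis is exactly what guarantees that both curves lift to \emph{embeddings}.

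First I would pass to the universal cover $p:\tilde S\to S$. Since $\alpha$ is non-nullhomotopic and unobstructed, it determines a nontrivial element $g=[\alpha]_{\pi_1}\in\pi_1(S)$ and, by definition of unobstructed, lifts to a properly embedded line $\tilde\alpha\subset\tilde S$ that is invariant under the deck transformation $g$. Because $\alpha$ and $\beta$ are freely homotopic, $[\beta]_{\pi_1}$ is conjugate to $g$; after replacing the chosen lift of $\beta$ by a deck translate I may assume that the properly embedded lift $\tilde\beta$ is invariant under the same $g$. Set $\langle g\rangle\cong\ZZ$ (infinite cyclic, since $\pi_1(S)$ is torsion-free for an aspherical $S$) and let $q:A:=\tilde S/\langle g\rangle\to S$ be the cover corresponding to $\langle g\rangle$. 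It is an open annulus, and $q$ is a local diffeomorphism.

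Next I would push the lifts down one step. The lines $\tilde\alpha,\tilde\beta$ are $g$-invariant and embedded, and $\langle g\rangle$ acts on each by translation, so their images $\hat\alpha,\hat\beta\subset A$ are embedded circles; concretely these are the lifts of $\alpha,\beta$ to $A$ furnished by their parametrizations, and they are essential because each represents the generator $g$ of $\pi_1(A)\cong\ZZ$. Thus $\hat\alpha$ and $\hat\beta$ are two essential simple closed curves in the annulus $A$ which are freely homotopic. Then I would invoke the classification of curves in an annulus: two freely homotopic essential simple closed curves are ambient isotopic. This is the step that uses Theorem \ref{epstein}, since cutting $A$ along a properly embedded spanning arc reduces the closed-curve statement to the relative (arc) form of Epstein's theorem. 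An isotopy $\hat H:S^1\times[0,1]\to A$ from $\hat\alpha$ to $\hat\beta$ is through embeddings, hence a free regular homotopy, and composing with the local diffeomorphism $q$ yields an immersed homotopy $q\circ\hat H:S^1\times[0,1]\to S$ from $\alpha$ to $\beta$. This exhibits $\alpha$ and $\beta$ as freely regularly homotopic.

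The main obstacle is the equivariance/descent issue, which is precisely what the cyclic cover resolves. Working directly in $\tilde S$, one can compactify to a disk and apply Epstein's theorem to the two properly embedded lines with common endpoints at infinity; but the resulting isotopy rel endpoints need not commute with $g$ and so need not descend to $S$. Quotienting by $\langle g\rangle$ \emph{before} isotoping forces equivariance automatically. The remaining care is to confirm that the annulus classification genuinely follows from the arc form of Epstein's theorem as stated, and to check that the constructed isotopy respects the orientation of $S^1$ (both $\hat\alpha$ and $\hat\beta$ represent $+g$), so that the resulting regular homotopy lies in a single path component of $\mathrm{Imm}(S^1,S)$.
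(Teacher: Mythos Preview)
Your argument is correct and takes a genuinely different route from the paper. The paper first arranges $\alpha$ and $\beta$ to share a basepoint, lifts them to embedded \emph{arcs} $\hat\alpha,\hat\beta$ in the universal cover $\tilde S$ with common endpoints $\tilde p$ and $g\cdot\tilde p$, applies Epstein's Theorem~\ref{epstein} directly to these arcs to obtain an isotopy rel endpoints, and then projects by the covering map. You instead pass all the way down to the cyclic cover $A=\tilde S/\langle g\rangle$, where unobstructedness turns both curves into essential \emph{simple closed curves} in an annulus, and then appeal to the classification of such curves (itself a consequence of Epstein's arc theorem after cutting $A$ along a spanning arc). What your approach buys is that the intermediate objects are already closed curves, so the projected isotopy is automatically a regular homotopy of loops; the paper's arc-based isotopy is rel endpoints but not a priori rel tangent directions at those endpoints, so a small extra smoothing argument is tacitly needed to ensure the projected loops remain immersed at the basepoint throughout. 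Conversely, the paper's approach invokes Theorem~\ref{epstein} exactly as stated, whereas you must first reduce the closed-curve statement in $A$ to the arc form, which is routine but is an extra step.
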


The interplay between different types of equivalence classes of loops now becomes important.
Recall our notational convention from Section \ref{surfacetopsec} of appending subscripts to brackets around loops, indicating the quotient set to which the equivalence class belongs.

\begin{proof}
 Let $\alpha$ and $\beta$ be unobstructed curves belonging to the same free homotopy class: $[\alpha]_{\hat{\pi}}=[\beta]_{\hat{\pi}}$, given in general position, in particular there are finitely many points of  intersection.

 First we show that we can reduce the problem to the case where $\alpha$ and
 $\beta$ are homotopic at a fixed basepoint. Since $\alpha$ and $\beta$ are
 freely homotopic, there are basepoints $a$ and $b$ for $\a$ and $\b$ and a
 path $\gamma:[0,1]\to S$ from $a$ to $b$ such that
 $$[\alpha]_{\pi_1(S,a)}=[\bar{\gamma}\cdot\beta\cdot\gamma]_{\pi_1(S,a)}$$
 After a small homotopy we can assume that the curves $\gamma$ and
 $\bar{\gamma}$ are smooth, regularly parametrized, concatenate regularly
 with each other and $\beta$ and have no fish-tails. In particular, the
 cyclic segment $\gamma\cdot\bar{\gamma}$ can be made to share the same
 tangent vector as $\alpha$ at $a$ and to freely regularly retract back to
 the original path segment of $\beta$ crossing $b$.  In this way we obtain a
 new unobstructed representative $\tilde{\beta}$ of
 $[\bar{\gamma}\cdot\beta\cdot\gamma]_{\pi_1(S,a)}$ which is regularly
 freely homotopic to $\beta$ and basepoint homotopic to $\alpha$.

So, without loss of generality, we make the additional assumption that
 $\alpha$ and $\beta$ share a basepoint and are basepoint homotopic
\begin{equation}\label{abhtyeq} [\alpha]_{\pi_1}=[\beta]_{\pi_1}.\end{equation}
Let $\pi :\tilde{S}\to S$ be the universal covering and, after fixing a
basepoint $\tilde{p}\in\pi^{-1}(p)$, there are paths $\hat{\alpha}$ and
$\hat{\beta}$ lifting the loops $\alpha$ and $\beta$ respectively.  Since
$\a$ and $\b$ are unobstructed, the paths $\hat{\a}$ and $\hat{\b}$ are
embedded. Equation \eqref{abhtyeq} implies that these
lifts share endpoints and are homotopic rel endpoints.  So by Epstein's Theorem \ref{epstein},
$\hat{\a}$ and $\hat{\b}$ are isotopic rel endpoints in $\tilde{S}$.
Applying the covering map $\pi$ to this isotopy produces a regular homotopy
between $\alpha$ and $\beta$.
\end{proof}

Combining Lemma \ref{existencelemma} and Lemma \ref{uniquenesslemma} shows
that any two choices of unobstructed representatives for a free homotopy
class are freely regularly homotopic. Thus any non-nulhomotopic free
homotopy class $[\ga]$ contains a unique unobstructed representative $\Phi([\ga])$ up to free
regular homotopy. This is recorded by the proposition below.

\begin{prop}\label{correspprop}
There is a bijection $\Phi$ between non-nulhomotopic free homotopy classes of curves on $S$ and regular free homotopy classes of unobstructed curves on $S$ 
$$\Phi : \hat{\pi}(S)\backslash \{C\} \xto{\sim}  \hat{\pi}_{U,R} (S)$$
such that a free homotopy class $[\alpha]$ contains the unobstructed regular homotopy class to which it corresponds.
\end{prop}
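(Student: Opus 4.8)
The plan is to assemble the proposition directly from the two preceding lemmas, which carry all the real content: Lemma \ref{existencelemma} supplies an unobstructed representative in each non-nulhomotopic free homotopy class, and Lemma \ref{uniquenesslemma} shows that representative is unique up to free regular homotopy. Concretely, given a class $[\alpha]\in\hat{\pi}(S)\backslash\{C\}$, I would define $\Phi([\alpha])$ to be the free regular homotopy class $[\bar{\alpha}]_R\in\hat{\pi}_{U,R}(S)$ of any unobstructed representative $\bar{\alpha}\in[\alpha]$ produced by Lemma \ref{existencelemma}. Well-definedness is then immediate from Lemma \ref{uniquenesslemma}: any two unobstructed representatives of the same free homotopy class are freely regularly homotopic, hence determine the same element of $\hat{\pi}_{U,R}(S)$.

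For injectivity I would exhibit $\Phi$ as a section of $s_*$, that is, $s_*\circ\Phi = \mathrm{id}$ on $\hat{\pi}(S)\backslash\{C\}$. This holds because $s_*$ merely forgets the immersion structure, so $s_*([\bar{\alpha}]_R) = [\bar{\alpha}] = [\alpha]$. A map admitting a left inverse is injective, so $\Phi([\alpha]) = \Phi([\beta])$ yields $[\alpha]=[\beta]$ after applying $s_*$.

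For surjectivity, take any unobstructed regular class $[\gamma]_R\in\hat{\pi}_{U,R}(S)$ and set $[\gamma] := s_*([\gamma]_R)$. Since $\gamma$ is unobstructed, Abouzaid's Lemma \ref{abouzaidlem} guarantees $\gamma$ is not nulhomotopic, so $[\gamma]\in\hat{\pi}(S)\backslash\{C\}$ and $\Phi$ is defined on it. As $\gamma$ is itself an unobstructed representative of $[\gamma]$, Lemma \ref{uniquenesslemma} forces $\Phi([\gamma]) = [\gamma]_R$, which proves surjectivity and completes the bijection.

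In this sense the proof is essentially formal once the two lemmas are in hand. The only point demanding care is the bookkeeping of the three distinct equivalence relations in play (free homotopy, free regular homotopy, and based homotopy), so that the identity $s_*\circ\Phi=\mathrm{id}$ and the appeal to uniqueness are applied to the correct classes. The genuine mathematical obstacle lives upstream in Lemma \ref{uniquenesslemma}, where Epstein's Theorem and the embeddedness of lifts of unobstructed curves to $\tilde{S}$ do the work; at the level of this proposition the labor is purely organizational.
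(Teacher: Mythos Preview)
Your proof is correct and follows essentially the same approach as the paper: define $\Phi$ via Lemmas \ref{existencelemma} and \ref{uniquenesslemma}, then verify bijectivity. The only cosmetic difference is that for injectivity the paper argues by contrapositive (distinct free homotopy classes give distinct regular classes since regular homotopy implies free homotopy), whereas you phrase the same content as $s_*\circ\Phi=\mathrm{id}$; these are equivalent, and your formulation has the small advantage of making the section property explicit, which is used later in the paper anyway.
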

 \begin{proof} 
   By Lemma \ref{existencelemma}, for any non-nulhomotopic free homotopy
   class $[\alpha]_{\hat{\pi}}$, there exists an unobstructed representative
   $\alpha_U\in[\alpha]_{\hat{\pi}}$. Furthermore, by Lemma
   \ref{uniquenesslemma}, any other unobstructed representative
   $\alpha_U ^\prime\in[\alpha]_{\hat{\pi}}$ is freely regularly homotopic
   to $\alpha_U$. Thus
   $[\alpha_U]_{\pi_{U,R}}=[\alpha_U ^\prime]_{\pi_{U,R}}$. So we define
$$\Phi([\alpha]_{\hat{\pi}}):=[\alpha_U]_{\pi_{U,R}}.$$

Now the map $\Phi$ is injective because if two loops $\beta$ and $\gamma$
are not freely homotopic then any choice of unobstructed representatives
$\beta_U$ and $\gamma_U$ will, by transitivity, not be freely homotopic, and so
cannot be freely regularly homotopic.  The map $\Phi$ is surjective because any
unobstructed curve is non-nulhomotopic and thus belongs to some
non-nulhomotopic free homotopy class.
\end{proof}

In order to define the grading, we must address
additivity under the product $\alpha\cdot_p \beta$ through
this correspondence. This is the content of the next section.

\section{Grading the Goldman Lie algebra}\label{windgoldsec}

We want to use the map $\Phi$ in Proposition \ref{correspprop} to define a
grading on the Goldman Lie algebra.  In Lemma \ref{prodlem} we show that the
oriented loop product of two non-nulhomotopic unobstructed curves at a point of intersection
is unobstructed when the point of intersection is not the corner of a
bigon. In Proposition \ref{wadditiveprop} we show that the winding number is
additive over unobstructed curves.  The section concludes with the grading
on the Goldman Lie algebra in Theorem \ref{gsgrthm}.

\begin{lemma}\label{prodlem} 
  Suppose $\alpha$ and $\beta$ are unobstructed curves intersecting at a
  point $p\in S$. If $p$ is not the corner of a bigon then the oriented loop
  product $\alpha\cdot_p\beta$ is either unobstructed or nulhomotopic.
\end{lemma}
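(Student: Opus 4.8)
The plan is to run everything through Abouzaid's Lemma \ref{abouzaidlem}. Since the conclusion already permits $\gamma:=\alpha\cdot_p\beta$ to be nulhomotopic, it suffices to show that \emph{if} $\gamma$ is not nulhomotopic \emph{then} it bounds no immersed fish-tail. I would argue by contradiction: assuming $\gamma$ is not nulhomotopic but does bound a fish-tail, I will produce either a fish-tail of $\alpha$ or of $\beta$ (contradicting that $\alpha,\beta$ are unobstructed, by Lemma \ref{abouzaidlem}) or a bigon of $\alpha$ and $\beta$ having a corner at $p$ (contradicting the hypothesis). The basic structural input is that the loop product resolves the transverse crossing at $p$ into exactly two disjoint strands inside a small neighborhood $U$ of $p$ (Figure \ref{fig:loopprod}), while away from $U$ the curve $\gamma$ is literally $\alpha\cup\beta$. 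Consequently every self-intersection of $\gamma$ is a self-intersection of $\alpha$, a self-intersection of $\beta$, or a point of $(\alpha\cap\beta)\setminus\{p\}$.

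First I would pass to an innermost fish-tail: among all fish-tails of $\gamma$, choose one whose teardrop boundary $\delta\subset\gamma$ bounds a disk $D$ meeting $\gamma$ as simply as possible, and perform the standard innermost-disk surgery on the arcs of $\gamma\cap\mathrm{int}(D)$ so that I may assume $\mathrm{int}(D)\cap\gamma=\emptyset$. Then $\delta$ is an embedded teardrop, simple except at its tip $q$, bounding an embedded disk. The key observation is that, because $\gamma\cap U$ consists of exactly two disjoint arcs, each of which joins an $\alpha$-strand to a $\beta$-strand, the embedded arc $\delta$ can pass through $U$ at most twice; hence, reading once around $\delta$, the curve switches between following $\alpha$ and following $\beta$ at most twice, and every such switch happens at $p$.

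This leaves three cases. If $\delta$ makes no switch, it lies entirely in $\alpha$ or entirely in $\beta$, so it is a fish-tail of $\alpha$ or of $\beta$, which is impossible. If $\delta$ makes two switches and its tip $q$ lies in $(\alpha\cap\beta)\setminus\{p\}$, then $\delta$ consists of a single $\alpha$-arc and a single $\beta$-arc meeting at $q$ and at $p$; since it bounds an embedded disk, this is exactly a bigon of $\alpha$ and $\beta$ with a corner at $p$, contradicting the hypothesis. The remaining case, in which $\delta$ makes two switches but $q$ is a self-intersection of, say, $\alpha$, is the main obstacle: here $\delta$ runs from $q$ along $\alpha$ to $p$, once around all of $\beta$, and back along $\alpha$ to $q$, and the nullity of $\delta$ forces $\beta$ to be freely homotopic to the reverse of the complementary $\alpha$-subloop through $p$.

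To dispose of this last case I would pass to the universal cover $\tilde S$, where $\alpha$ and $\beta$ lift to properly embedded lines $L_\alpha,L_\beta$ through a chosen lift $\tilde p$ of $p$; this is the step that uses unobstructedness. The homotopy $\beta\simeq(\text{reversed }\alpha\text{-subloop})$ means that an appropriate lift of $\beta$ and a lift of $\alpha$ cross at $\tilde p$ and cobound a region in the simply connected space $\tilde S$ (for $\chi<0$ they are asymptotic to common endpoints at infinity). Taking an innermost such region gives an embedded bigon between lifts of $\alpha$ and $\beta$ with a corner at $\tilde p$, which projects to an embedded bigon of $\alpha$ and $\beta$ with a corner at $p$, the same contradiction; this innermost-bigon step is precisely the content of the bigon criterion, cf. \cite{fm}. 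Having excluded all three cases, $\gamma$ bounds no fish-tail, so by Lemma \ref{abouzaidlem} it is unobstructed. I expect the genuine difficulty to be concentrated entirely in this final case, namely controlling a fish-tail whose tip is a self-crossing of a single factor; the other two cases are forced directly by the two-strand structure of the resolution at $p$.
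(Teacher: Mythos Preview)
Your argument follows the same contrapositive-through-Abouzaid route as the paper, but you are considerably more careful: the paper's proof essentially stops at ``the fish-tail must cross $U$, so undoing the resolution at $p$ gives a bigon with corners $p$ and $q$,'' which tacitly assumes that the teardrop passes through $U$ exactly once (equivalently, that its tip $q$ lies in $\alpha\cap\beta$). You are right to isolate the possibility that the teardrop traverses both strands of $U$ and has its tip at a self-crossing of one factor; this case is not visibly treated in the paper's argument.

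That said, your resolution of this third case is not complete. The relation $[\beta]=[\alpha_1']^{-1}$ in $\pi_1(S,p)$ does \emph{not} force $L_\beta$ and a single lift $L_\alpha$ to meet in two points. Concretely, the lifted fish-tail disk $\tilde D$ is a \emph{triangle} with vertices $\tilde q,\tilde p,\tilde p'$ whose three sides lie on $L_\beta$ and on \emph{two different} lifts of $\alpha$: the two branches of $\alpha$ at the self-crossing $q$ lift to distinct embedded lines through $\tilde q$, so the arc $\tilde a_1$ (from $\tilde q$ to $\tilde p$) and the arc $\tilde a_2$ (from $\tilde p'$ to $\tilde q$) sit on different translates of $L_\alpha$. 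Your ``common endpoints at infinity'' heuristic fails for the same reason: $\alpha_1'$ is a subloop of $\alpha$, not $\alpha$ itself, and its axis need not coincide with the axis of either lift of $\alpha$ through $\tilde p$. Consequently the bigon you extract from the bigon criterion is between $L_\beta$ and \emph{some} lift of $\alpha$, but there is no reason its corners project to $p$; it therefore does not contradict the hypothesis ``$p$ is not the corner of a bigon.'' To close this case you need a further argument---either show that the innermost condition $\mathrm{int}(D)\cap\gamma=\emptyset$ already excludes this triangular configuration, or extract a bigon with a corner at $\tilde p$ from a more careful analysis of how the three lines $L_\alpha$, $L_\alpha'$, $L_\beta$ continue past the vertices of $\tilde D$.
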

\begin{proof}
  Let $U$ be a small neighborhood of the point $p$ as in Figure
  \ref{fig:loopprod}. By contrapositive, assume $\alpha\cdot_p\beta$ is
obstructed and not nulhomotopic,
 by Abouzaid's Lemma
  \ref{abouzaidlem}, the product $\alpha\cdot_p \beta$ contains a
  fish-tail. Observing that a self-intersection must occur at a fish-tail,
  and that $\alpha\cdot_p \beta$ does not self-intersect in $U$ by
  construction, we have that there must be some point of self-intersection
  $q\in S \backslash U$. In $S\backslash U$, connected components
  of $\alpha\cdot_p\beta$ belong originally to either $\alpha$ or $\beta$
  which by property of being unobstructed, cannot have fish-tails. Thus the
  fish-tail beginning at $q$ must cross through $U$. Since the fish-tail at
  $q$ bounds a disk, by undoing the resolution at $p$ we can see that this
  implies that $p$ and $q$ are the corners of a bigon as in Figure
  \ref{fig:bigonbegone} below.
\end{proof}

\begin{figure}[h]
\begin{center}
\begin{overpic}[scale=0.6]
{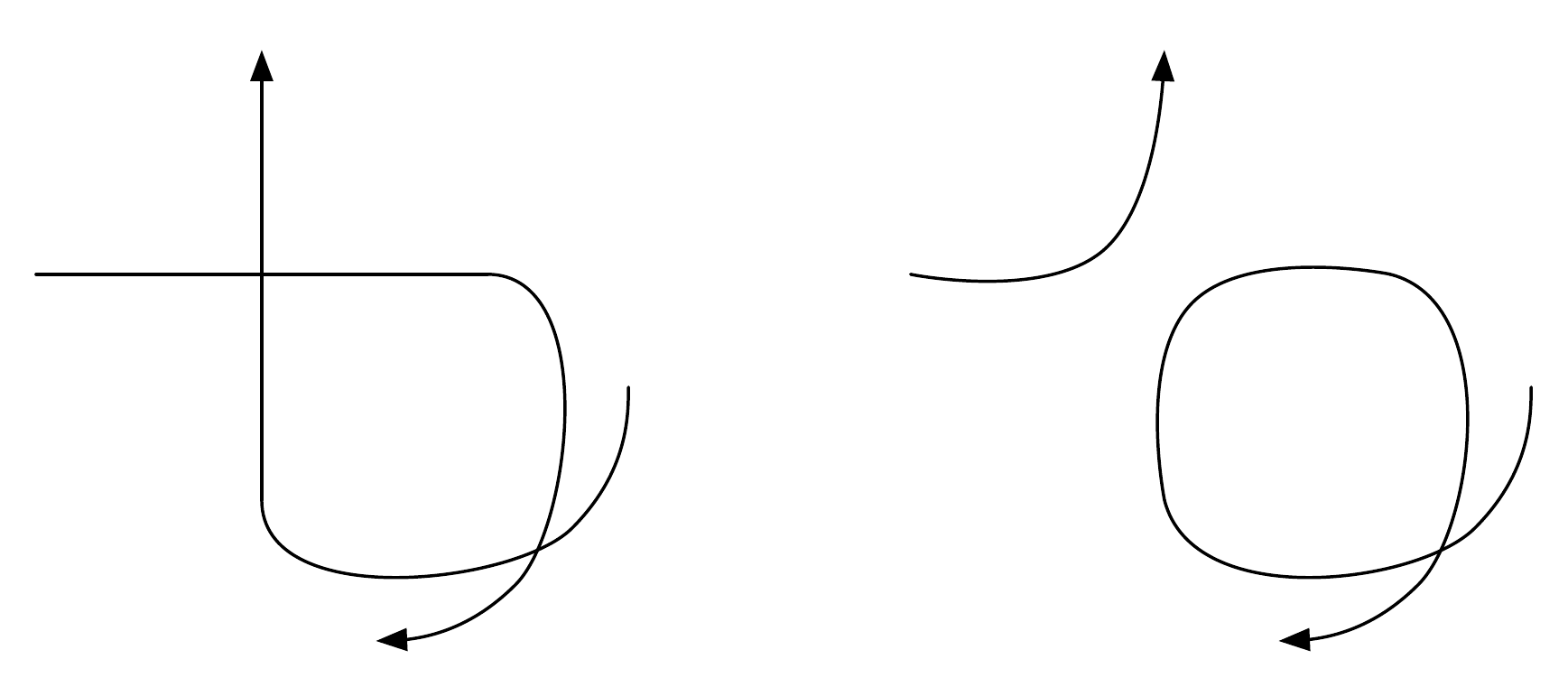}
\put(135,70){$\rightsquigarrow $}
\put(55,110){$\a$}
\put(10,65){$\b$}
\put(250,100){$\a\cdot_p\b$}
\put(55,67.5){$p$}
\put(105,15){$q$}
\end{overpic}
  \caption{Recovering a bigon from a fish-tail that crosses $U$}
  \label{fig:bigonbegone}
\end{center}
\end{figure}

\begin{cor}\label{phihom}
  If $\ga$ and $\tau$ do not represent the trivial nulhomotopic class and $p$ is not the corner of a bigon then
  $$\Phi([\ga\cdot_p \tau]) = \Phi([\ga])\cdot_p\Phi([\tau])$$
\end{cor}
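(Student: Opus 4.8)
The plan is to deduce this directly from Lemma \ref{prodlem} together with the uniqueness of unobstructed representatives from Proposition \ref{correspprop}; no new geometric input is needed. Since $\ga$ and $\tau$ are the unobstructed curves whose product at $p$ is under consideration, the classes $\Phi([\ga])$ and $\Phi([\tau])$ are represented by $\ga$ and $\tau$ themselves, and since the oriented loop product preserves immersions (as noted after Definition \ref{loopproddef}), the curve $\ga\cdot_p\tau$ is again regular. Hence $\Phi([\ga])\cdot_p\Phi([\tau])$ is the regular free homotopy class of $\ga\cdot_p\tau$, and the task reduces to identifying this class with $\Phi([\ga\cdot_p\tau])$.

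To carry this out I would apply Lemma \ref{prodlem}: because $\ga$ and $\tau$ are unobstructed, non-nulhomotopic, and $p$ is not the corner of a bigon, the curve $\ga\cdot_p\tau$ is either unobstructed or nulhomotopic. Assume first that it is unobstructed. Its free homotopy class is exactly $[\ga\cdot_p\tau]$ by Definition \ref{loopproddef} (the free class of the loop product depends only on the free classes of the factors), so $\ga\cdot_p\tau$ is an unobstructed representative of that class. By the uniqueness clause of Proposition \ref{correspprop}, $\Phi([\ga\cdot_p\tau])$ is precisely the unobstructed regular free homotopy class contained in $[\ga\cdot_p\tau]$, namely the regular class of $\ga\cdot_p\tau$. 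Comparing with the first paragraph then gives $\Phi([\ga\cdot_p\tau])=\Phi([\ga])\cdot_p\Phi([\tau])$.

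It remains to address the degenerate possibility that $\ga\cdot_p\tau$ is nulhomotopic, in which case $[\ga\cdot_p\tau]=C$ lies outside the domain of $\Phi$. Here the identity should be read relative to the fixed regular representative $\Phi(C)$ of Figure \ref{fig:trivialrep}, or equivalently the contribution is absorbed into the center of the Goldman bracket as discussed in Section \ref{absec}, so it does not affect the resulting grading. I expect the main obstacle to be purely organizational rather than substantive: one must keep the three notions of equivalence aligned—free homotopy $\hat{\pi}(S)$, regular homotopy $\hat{\pi}_R(S)$, and unobstructed regular homotopy $\hat{\pi}_{U,R}(S)$—and verify at each step that the oriented loop product is compatible with the maps $s_*$ and $\Phi$ relating them. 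Once Lemma \ref{prodlem} is in hand, the corollary follows as a bookkeeping consequence.
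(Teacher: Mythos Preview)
Your proposal is correct and follows essentially the same approach as the paper: both arguments reduce the corollary to Lemma~\ref{prodlem}, observing that when no fish-tails are created by the loop product, the unobstructed representative of $[\ga\cdot_p\tau]$ is precisely $\ga\cdot_p\tau$ itself. Your treatment is in fact more careful than the paper's two-line proof, since you explicitly invoke the uniqueness from Proposition~\ref{correspprop} and flag the degenerate nulhomotopic case (which the paper's proof passes over in silence, though it is handled later via Lemma~\ref{dhom} and Remark~\ref{NWRK}).
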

  \begin{proof}
The map $\Phi$ contracts fish-tails. The equation holds when no new fish-tails are created by the loop product and Lem. \ref{prodlem} above gives this criteria.
    \end{proof}

\begin{rmk}\label{NWRK}
Note that if $p$ and $q$ are the corners of a bigon between two curves $\alpha$ and $\beta$ then 
$$[\alpha \cdot_p \beta]=[\alpha \cdot_q \beta]
\quad\quad\text{ and }\quad\quad
\epsilon_p(\alpha,\beta)=-\epsilon_q(\alpha,\beta).$$ So the two summands in
$[[\alpha],[\beta]]$ corresponding to $\alpha\cdot_p\beta$ and
$\alpha\cdot_q \beta$ cancel with each other. Thus for our purposes, when
thinking about the Goldman Lie bracket, we do not need to concern ourselves
with additivity of the winding number of curves over oriented resolution at
the corners of a bigon.
\end{rmk}

We next show that $\omega$ is additive when the criteria above is satisfied.

\begin{prop}\label{wadditiveprop}
Let $\alpha$ and $\beta$ be unobstructed curves. Suppose that $p\in\alpha\cap\beta$ is not the corner of a bigon between $\alpha$ and $\beta$ then the winding number is additive over the oriented loop product
 $$ \omega(\alpha\cdot_p\beta)=\omega(\alpha)+\omega(\beta). $$
\end{prop}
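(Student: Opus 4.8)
The plan is to reduce the statement to a purely local computation of tangent turning at the crossing point $p$, using Reinhart's integral description of $\omega$. Recall that the winding number of a regular curve $\gamma$ is the degree of its angle map $A_\gamma(\theta)=\angle_{\gamma(\theta)}(\dot\gamma,X)$, so that $\omega(\gamma)=\frac{1}{2\pi}\int_{S^1}A_\gamma^*(d\theta)\imod{\chi}$. First I would fix a disk $U$ around $p$ small enough that $\alpha\cap U$ and $\beta\cap U$ are single arcs meeting transversally at $p$ and that $U$ contains no zero of $X$; after a regular homotopy I may assume $\alpha$ and $\beta$ are straight segments through $p$ inside $U$. Outside $U$ the immersed curve $\alpha\cdot_p\beta$ coincides with $\alpha\sqcup\beta$, so the difference $\omega(\alpha\cdot_p\beta)-\omega(\alpha)-\omega(\beta)$ is computed entirely by the integrals of $d\angle(\,\cdot\,,X)$ over $U$: it equals the total turning of the two resolution arcs minus the turning along $\alpha\cap U$ and $\beta\cap U$. (Note the naive idea of invoking a homomorphism property of $\omega$ on $\pi_R(S,p)$ fails here precisely because $\dot\alpha(p)$ and $\dot\beta(p)$ are distinct tangent directions, so the based loops do not concatenate in the unit tangent bundle without a connecting turn.)

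Next I would carry out the local turning computation. Since $U$ may be taken arbitrarily small and $X$ is continuous and nonvanishing on $U$, the contribution of $X$ to $\int d\angle(\dot\gamma,X)$ over any arc in $U$ tends to $0$, so only the planar turning of the tangent $\dot\gamma$ matters; with $\alpha,\beta$ straight through $p$ the arcs $\alpha\cap U$ and $\beta\cap U$ contribute zero turning. The oriented resolution of Figure \ref{fig:loopprod} replaces the transverse crossing by two embedded, non-crossing arcs: one joins the incoming $\alpha$-strand to the outgoing $\beta$-strand, so its tangent turns from the direction of $\dot\alpha(p)$ to that of $\dot\beta(p)$; the other joins incoming $\beta$ to outgoing $\alpha$, turning from $\dot\beta(p)$ back to $\dot\alpha(p)$. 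Because each arc is simple and embedded it realizes the minimal turning in $(-\pi,\pi)$ between the two tangent directions, and these two minimal turnings are negatives of one another. Hence the local contribution vanishes and $\omega(\alpha\cdot_p\beta)=\omega(\alpha)+\omega(\beta)$ as integers, and a fortiori modulo $\chi$.

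Finally I would connect this to the winding number used for the grading. The computation is carried out for the actual immersed product curve $\alpha\cdot_p\beta$; to conclude that its Reinhart number equals the winding number $\omega(\Phi([\alpha\cdot_p\beta]))$ attached to its free homotopy class, I must know that the resolution introduced no fish-tail, since adding a fish-tail shifts $\omega$ by $\pm1$ (Definition \ref{zzactiondef}). This is exactly where the hypotheses enter: by Lemma \ref{prodlem}, since $\alpha$ and $\beta$ are unobstructed and $p$ is not the corner of a bigon, the product $\alpha\cdot_p\beta$ is itself unobstructed (or nulhomotopic, in which case the assertion is absorbed into the separate treatment of the central class $C$). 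Thus $\alpha\cdot_p\beta$ agrees with its canonical representative $\Phi([\alpha\cdot_p\beta])$ up to regular homotopy and the two winding numbers coincide.

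The main obstacle I expect is the turning cancellation in the second step: one must argue rigorously that the oriented resolution yields two embedded arcs whose tangents turn by equal and opposite \emph{minimal} angles, rather than by amounts differing by a full $2\pi$. I would handle this by checking that the oriented resolution routes the two arcs through complementary sectors of the crossing and that a simple embedded arc joining two rays realizes the turning reduced into $(-\pi,\pi)$; inspecting both cases $\epsilon_p(\alpha,\beta)=\pm1$ confirms that the turnings always sum to zero.
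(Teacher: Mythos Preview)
Your proposal is correct and matches the paper's second (geometric) proof almost exactly: the paper too localizes to a small disk $U$ around $p$, chooses $X$ constant there, notes that $\alpha\cup\beta$ and $\alpha\cdot_p\beta$ agree outside $U$, and checks that inside $U$ the two resolution arcs contribute cancelling turnings (explicitly $+\pi/2$ and $-\pi/2$ in the paper's normalization) while the straight segments of $\alpha$ and $\beta$ contribute nothing. Your final paragraph invoking Lemma~\ref{prodlem} to guarantee that $\alpha\cdot_p\beta$ is itself unobstructed also mirrors the paper's treatment.

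One point worth flagging: the paper additionally records a short algebraic first proof, simply saying that the oriented loop product is the product in $\pi_R(S,p)$ and that $\omega$ is a group homomorphism there. You explicitly rejected this route on the grounds that $\dot\alpha(p)\ne\dot\beta(p)$, so the two loops are not based at the same point of $S(TS)$. Your concern is legitimate but not fatal: the smoothed curve traverses $\alpha$, then a short fiber arc turning from the direction of $\dot\alpha$ to that of $\dot\beta$, then $\beta$, then the inverse fiber arc back; the two fiber arcs cancel, so additivity still holds. Your cancellation of the two ``minimal turnings'' in $(-\pi,\pi)$ is precisely the geometric incarnation of this cancellation, so the two arguments are really the same observation phrased differently.
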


\begin{proof}
The curves $\a$ and $\b$ determine classes $[\a]_R$ and $[\b]_R$ and
 the oriented loop product $(\a,\b)\mapsto \a\cdot_p\b$ is the product in
  the group $\pi_R(S,p)$.  In Section \ref{windingsec}, the winding number
  $\omega : \pi_R(S,p) \to \ZZ/\chi$ is defined to be a homomorphism on this
  group. If $p$ is not the corner of a bigon then $\a\cdot_p\b$ will be
  unobstructed and so it will agree up to regular homotopy with the
  unobstructed representative which determines the winding number.
\end{proof}
A second geometric argument uses Reinhart's definition of winding number.
\begin{proof}
Arrange the curves as in Figure \ref{fig:loopprod} and let $U$ be a small neighborhood of $p$. 
We choose a vector field $X$ which is constant in the horizontal direction.
  Let $A_\alpha$,$A_\beta$ and $A_{\alpha\cdot_p\beta}$ be the angle functions
  from Definition \ref{RHdef}. We will prove that
\begin{equation}\label{inteq}
  \frac{1}{2\pi}\int_{S^1} A^*_\alpha (d\theta)+\frac{1}{2\pi}\int_{S^1} A^*_\beta (d\theta) =\frac{1}{2\pi}\int_{S^1} A^*_{\alpha\cdot_p\beta} (d\theta) \quad\quad\imod{\chi}.
  \end{equation}

Since the curves $\alpha\cup\beta$ and $\alpha\cdot_p\beta$ are identical
outside of $U$, the integrals take the same values when restricted to the
preimage of $S\backslash U$ in $S^1$.

\begin{figure}[h]
\begin{center}
\begin{overpic}[scale=0.7]
{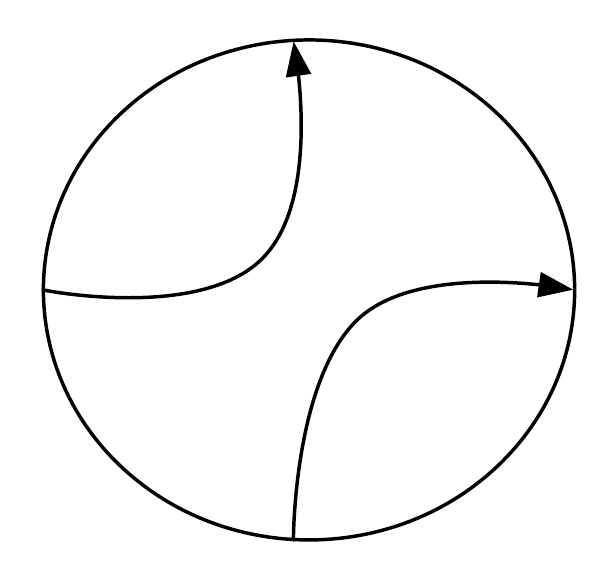}
\put(-7.5,55){$P_1$}
\put(120,55){$P_4$}
\put(55,-5){$P_3$}
\put(55,115){$P_2$}
\end{overpic}

   \caption{Labelled points of entry and exit of $\alpha\cdot_p\beta$ on $\partial\bar{U}$}
   \label{fig:additive}
\end{center}
\end{figure}

We must examine what happens within $U$. We show that the component of the
integrals inside the preimage of $U$ in $S^1$ provides no contribution to
the integrals on both sides of the equality. 
Without loss of generality, assume $X$ is tangent
to $\alpha\cdot_p\beta$ at $P_1$ and the rest of $X$ on $U$ is obtained by
parallel translation.
Then when traversing the curve $\alpha\cdot_p\beta$ from $P_1$ to $P_2$, there is a change
of $\pi/2$ in angle. Similarly traversing $\alpha\cdot_p\beta$ from $P_3$
to $P_4$, the opposite change of angles between the tangent vectors and $X$
occurs in a mirrored fashion, giving an overall contribution of zero to the
total change in angles between the curve's tangent and $X$ inside of
$U$. Since $\alpha$ is perpendicular to $X$ in $U$ and $\b$ is parallel to $X$ in $U$, 
the total change of angle between tangent vectors and $X$ in $U$ is zero for
both $\alpha$ and $\beta$. Hence Equation \eqref{inteq} holds.

\end{proof}

The proposition above implies the corollaries below.

\begin{cor}\label{rescor}
Crossings can be resolved without changing winding number of regular homotopy classes.
Crossings which do not bound bigons can be resolved without changing the winding number of free homotopy classes
$$\CPPic{spcross} \sim_\omega \CPPic{orres}$$
where $\ga \sim_\omega \ga'$ when $\omega(\ga) = \omega(\ga')$.
\end{cor}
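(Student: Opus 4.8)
The plan is to read the local resolution displayed in the statement as a single instance of the oriented loop product $\alpha\cdot_p\beta$ from Figure \ref{fig:loopprod}, carried out at the crossing point $p$, and then to reduce both assertions to the additivity of $\omega$. On the unresolved side the picture is a pair of strands, which locally belong either to two curves $\alpha,\beta$ or to the two branches of a single self-crossing curve. In the two-curve case the unresolved diagram is the multicurve $\alpha\sqcup\beta$, whose winding number is $\omega(\alpha)+\omega(\beta)$, while the resolved diagram is $\alpha\cdot_p\beta$; in the self-crossing case $\gamma=\gamma_1\cdot_p\gamma_2$ based at $p$ and the resolution is the multicurve $\gamma_1\sqcup\gamma_2$. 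In every case the claim becomes the single identity $\omega(\alpha\cdot_p\beta)=\omega(\alpha)+\omega(\beta)$.

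For the regular homotopy statement I would invoke that $\omega\colon\pi_R(S,p)\to\ZZ/\chi$ is a group homomorphism and that, based at $p$, the oriented loop product is exactly multiplication in $\pi_R(S,p)$, as in the first proof of Proposition \ref{wadditiveprop}. A homomorphism is additive with no hypothesis on the crossing, which is precisely why the regular statement requires no condition on bigons.

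For the free homotopy statement I would pass to the unobstructed representatives $\Phi([\alpha])$ and $\Phi([\beta])$ that compute the free winding number. Since $p$ is not the corner of a bigon, Corollary \ref{phihom} gives $\Phi([\alpha\cdot_p\beta])=\Phi([\alpha])\cdot_p\Phi([\beta])$, and Lemma \ref{prodlem} ensures this product is again unobstructed or nulhomotopic. Proposition \ref{wadditiveprop}, applied to the unobstructed curves $\Phi([\alpha])$ and $\Phi([\beta])$, then yields $\omega(\Phi([\alpha\cdot_p\beta]))=\omega(\Phi([\alpha]))+\omega(\Phi([\beta]))$, the desired equality of free winding numbers.

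I expect the main obstacle to be the bookkeeping that identifies the two sides of the local picture with the product and its factors, and in particular the self-intersection case, where resolving splits one curve into two: there one must confirm that the based decomposition $\gamma=\gamma_1\cdot_p\gamma_2$ is legitimate and that the non-bigon hypothesis keeps every curve in sight unobstructed, so that Proposition \ref{wadditiveprop} applies. Once this matching is fixed, the result is an immediate consequence of the additivity already established.
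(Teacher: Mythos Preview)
Your proposal is correct and follows the same route the paper intends: the corollary is stated immediately after Proposition~\ref{wadditiveprop} with only the remark that ``the proposition above implies the corollaries below,'' and your argument is precisely the unpacking of that implication --- reading the local resolution as the oriented loop product and invoking the homomorphism property of $\omega$ on $\pi_R(S,p)$ for the regular case, and Proposition~\ref{wadditiveprop} together with the non-bigon hypothesis for the free case. Your explicit treatment of the self-crossing situation (one curve splitting into two) is a useful elaboration the paper leaves implicit but relies on later when applying this corollary to the HOMFLY-PT skein relation in Proposition~\ref{relprop}.
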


An important special case shows that fish-tails can be removed. 

\begin{cor}\label{fishtailpm1}
Fish-tail crossings can be resolved without changing the winding number
$$\CPPic{pcorres1} \sim_\omega\quad \CPPic{wcorres1} \conj{ and }\quad \CPPic{pcorres2} \sim_\omega \quad \CPPic{wcorres2}$$
where $\ga \sim_\omega \ga'$ when $\omega(\ga) = \omega(\ga')$.
\end{cor}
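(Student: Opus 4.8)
The plan is to treat a fish-tail crossing as a single self-crossing and to reduce directly to the resolution-invariance already established in Proposition \ref{wadditiveprop} and Corollary \ref{rescor}. First I would record the one geometric observation that makes those results applicable: by Definition \ref{fishtaildef} a fish-tail is a homotopically trivial self-intersection, so at the crossing point $p$ the two local branches cobound a disk that meets the curve only at $p$. Thus $p$ is the corner of a \emph{monogon}, and in particular it is not the corner of a bigon, since a bigon is cut out by two distinct crossing vertices. This is exactly the hypothesis required by Proposition \ref{wadditiveprop}.

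With this in hand the corollary is immediate. Viewing the fish-tail crossing as a crossing to be resolved, the first assertion of Corollary \ref{rescor} gives that the oriented resolution preserves $\omega$ on regular homotopy classes, while the second assertion — which needs precisely that the crossing bounds no bigon — gives the same for free homotopy classes. Applying this to the positively-oriented fish-tail yields the first relation $\sim_\omega$ in the statement, and the relation for the negatively-oriented fish-tail follows by reversing all orientations, exactly the symmetry used for $t$ and $t^{-1}$ in Definition \ref{zzactiondef}. If one preferred an argument independent of Corollary \ref{rescor}, I would instead run the local angle computation of the second proof of Proposition \ref{wadditiveprop} in a neighborhood $U$ of $p$: choosing $X$ adapted to the two branches, the contributions of the two smoothed strands inside $U$ cancel modulo $\chi$, so Reinhart's integral from Definition \ref{RHdef} is unchanged.

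The only point that genuinely deserves care — and which I would emphasize to forestall confusion — is the apparent tension with Definition \ref{zzactiondef}, where adding a fish-tail via the $\ZZ$-action shifts the winding number by $\pm 1$. There is no contradiction: the $\ZZ$-action $t$ is a modification of the curve that genuinely raises $\omega$ by one, whereas the present corollary concerns the different operation of \emph{resolving} the resulting self-crossing by the oriented smoothing. The oriented resolution does not invert $t$; it splits the contractible fish-tail loop off as a small circle of winding $\pm 1$ while retaining the main strand, so the $\pm 1$ is preserved in the total and $\omega$ is unchanged. Beyond this conceptual reconciliation and the routine check that the curves drawn on the right in the statement are indeed the oriented smoothings at the fish-tail crossings, there is no substantive analytic obstacle, since all of the real work has already been carried out in Proposition \ref{wadditiveprop}.
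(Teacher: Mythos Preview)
Your proposal is correct and matches the paper, which offers no separate proof and presents Corollary \ref{fishtailpm1} as an immediate special case of Corollary \ref{rescor}. The monogon observation is harmless but not strictly needed: since a curve carrying a fish-tail is already obstructed, only the first (regular-homotopy) assertion of Corollary \ref{rescor} is in play, and that assertion follows from $\omega$ being a homomorphism on $\pi_R(S,p)$ with no bigon hypothesis required; your reconciliation with the $\ZZ$-action is a welcome clarification.
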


Since a counterclockwise nulhomotopic curve has winding number $+1$. The two
corollaries above show that the winding number of any curve can be computed
in terms of its unobstructed representative by removing bigons and
fish-tails.

We are ready to define the grading.

\begin{defn}[Grading on $\gs$]
Recall that $\gs = \ZZ\inp{\hat{\pi}(S)}$.  Let $[\alpha]$ be an element of $\hat{\pi}(S)\backslash \{C\}$. Let $\Phi([\a])$ be the regular homotopy class of the unobstructed representative of $[\a]$.  The {\em winding number} 
  $\omega_{\g}([\alpha])$ of $[\alpha]$
is defined to be 
  the winding number of $\Phi([\a])$. In the language of Proposition \ref{correspprop},
$$\omega_{\g}([\alpha]_{\hat{\pi}}):=\omega(\Phi([\alpha]_{\hat{\pi}})).$$
Following discussion in Section \ref{absec}, if $C$ represents the free homotopy class of the contractible curve then we choose $\Phi(C)$ to be the infinity curve and set $\omega_{\g}(\Phi(C)) = 0$.
\end{defn}

The theorem below confirms that this definition gives a cyclic $\ZZ/\chi$-grading on the Goldman Lie algebra.

\begin{theorem}\label{gsgrthm}
The map $\omega_{\g}$ defines a grading of Goldman Lie algebra $\gs$.
\end{theorem}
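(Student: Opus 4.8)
The plan is to reduce the theorem to the grading criterion recalled in Section \ref{gradingsec}. Since $\gs = \ZZ\inp{\hat{\pi}(S)}$ is free on $\hat{\pi}(S)$ and $\omega_{\g}$ is a set map $\hat{\pi}(S)\to\ZZ/\chi$, it automatically produces a direct sum decomposition $\gs = \bigoplus_{a}\gs_a$ with $\gs_a = \ZZ\inp{[\a]\mid \omega_{\g}([\a])=a}$. The only thing remaining is compatibility with the Lie bracket, and by bilinearity it suffices to check it on generators: for $[\a]$ of degree $a$ and $[\b]$ of degree $b$, every term surviving in the signed sum $[[\a],[\b]] = \sum_{p\in\a\cap\b}\epsilon_p(\a,\b)[\a\cdot_p\b]$ must satisfy $\omega_{\g}([\a\cdot_p\b]) = a+b$.

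First I would dispose of the contractible class. If $[\a]=C$ or $[\b]=C$, then $[[\a],[\b]]=0$ because $C$ is central in the Goldman Lie bracket (Section \ref{absec}), and $0$ is homogeneous of every degree. So I may assume both classes are non-contractible and replace them by their unobstructed representatives $\a=\Phi([\a])$ and $\b=\Phi([\b])$ via Proposition \ref{correspprop}; by definition this means $a=\omega(\a)$ and $b=\omega(\b)$. The bracket is then computed with these two fixed unobstructed representatives.

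Next I would sort the intersection points $p\in\a\cap\b$ by type. Points that are corners of bigons occur in cancelling pairs $\{p,q\}$ by Remark \ref{NWRK}, so they contribute nothing and may be discarded. For a point $p$ that is not a bigon corner and for which $\a\cdot_p\b$ is not nulhomotopic, Lemma \ref{prodlem} guarantees that $\a\cdot_p\b$ is unobstructed, whence Corollary \ref{phihom} gives $\Phi([\a\cdot_p\b]) = \Phi([\a])\cdot_p\Phi([\b])$ and Proposition \ref{wadditiveprop} yields $\omega_{\g}([\a\cdot_p\b]) = \omega(\a)+\omega(\b) = a+b$, exactly as required.

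The subtle case, which I expect to be the main obstacle, is a non-bigon corner $p$ at which $\a\cdot_p\b$ is nulhomotopic, contributing a term $\epsilon_p C$ of degree $\omega_{\g}(C)=0$; one must check this is still degree $a+b$. Here I would argue that $[\a\cdot_p\b]=C$ forces $[\a]_{\pi_1(S,p)}[\b]_{\pi_1(S,p)}=1$, hence $[\b]=[\a^{-1}]$ as free homotopy classes. Since reversing the orientation of a curve reverses its tangent field and therefore negates the degree of Reinhart's map, one has $\omega_{\g}([\a^{-1}])=-\omega_{\g}([\a])$, so that $a+b = \omega_{\g}([\a])+\omega_{\g}([\a^{-1}]) = 0 = \omega_{\g}(C)$. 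Thus every surviving term of $[[\a],[\b]]$ lies in $\gs_{a+b}$, which is precisely the grading condition. This also makes rigorous the ``grade modulo the central class $C$, then lift'' strategy of Section \ref{absec}: Proposition \ref{wadditiveprop} and Remark \ref{NWRK} grade the quotient by the central line $\ZZ C$, and the orientation-reversal identity is exactly what guarantees that assigning $C$ the degree $0$ lifts this grading back to $\gs$ consistently.
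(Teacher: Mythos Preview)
Your argument is correct and follows the same strategy as the paper: pass to unobstructed representatives via $\Phi$, discard bigon corners using Remark~\ref{NWRK}, and apply Proposition~\ref{wadditiveprop} at the remaining intersection points. You are in fact more careful than the paper on one point: the paper's own proof does not explicitly isolate the possibility that $\alpha\cdot_p\beta$ is nulhomotopic at a non-bigon corner (where $\omega_{\g}$ is fixed by the convention $\omega_{\g}(C)=0$ rather than computed through $\Phi$), whereas your orientation-reversal observation $\omega_{\g}([\alpha^{-1}])=-\omega_{\g}([\alpha])$ closes that case cleanly and makes precise the ``grade modulo $C$, then lift'' remark from Section~\ref{absec}.
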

\begin{proof}
  Since the unobstructed representative of a free homotopy class of curves
  is well-defined up to regular free homotopy, so is its winding
  number. Furthermore, by Proposition \ref{wadditiveprop}, along with the fact that the sum
  expression (Def. \ref{goldmandef}) for $[[\alpha],[\beta]]$ simplifies to contain no
  contribution from bigon corners in $\alpha\cap\beta$, we have that
  $[[\alpha],[\beta]]$ only consists of summands that are oriented loop
  products at points where additivity holds and   thus
  $$\omega_{\g}([[\alpha],[\beta]])=\omega_{\g}([\alpha])+\omega_{\g}([\beta]).$$
  Hence the above map determines a grading of the Goldman Lie algebra $\gs = \ZZ\inp{\hat{\pi}(S)}$.
\end{proof}

Recall that when $G = GL(n,\RR), GL(n,\CC)$ or $GL(n,\HH)$, Goldman
constructed a Poisson action of $\gs$ on the representation variety
$Hom(\pi_1(S),G)/G$, \cite[Thm. 5.4]{goldman}.  We ask whether there is a
compatible grading on the representation variety.

\begin{question}
Is the action of the Goldman Lie algebra on representation varieties graded?
\end{question}

\section{Regular Goldman Lie Algebra}\label{regoldmansec}

Just as the Goldman Lie algebra $\gs$ is defined in terms of free homotopy
classes of curves $\freehtpy(S)$, there is a Lie algebra $\grs$ which is
defined in terms of free regular homotopy classes of immersed curves
$\reghtpy(S)$.

\begin{definition}\label{def:reggla}
The {\em regular Goldman Lie algebra} of a surface $S$ is the free abelian group on the set $\reghtpy(S)$ of free regular homotopy classes of immersed curves 
$$\grs :=\ZZ\inp{\reghtpy(S)}.$$
The Lie bracket is defined in precisely the same way as the Goldman Lie bracket in Def. \ref{goldmandef}.
Given $[\a]_R,[\b]_R\in \reghtpy(S)$, the bracket is 
\begin{equation}\label{reggoldmanbracket}
  [[\alpha]_R,[\beta]_R] := \sum_{p\in \alpha\cap\beta}\epsilon_p (\alpha,\beta)[\alpha\cdot_p\beta]_R.
  \end{equation}
\end{definition}

The bracket above determines a Lie algebra structure.  In fact, the same
argument Goldman used to show that $\g(S)$ is a Lie algebra also shows that
$\g_R(S)$ is a Lie algebra \cite[Thm.~5.3]{goldman}.

All of the acrobatics related to $\Phi$ and unobstructed regular
representatives, used to introduce the winding number grading on $\g(S)$, 
are unnecessary for the regular Goldman Lie
algebra.  This is because the winding number homomorphism
$\omega:\pi_{R}(S,p)\to\ZZ/\chi$ is defined on $\pi_R(S,p)$, so it defines a map
on conjugacy classes $\reghtpy(S)$ by Lem. \ref{conjlemma}.
The proposition below summarizes this discussion.

\begin{prop}
  The regular Goldman Lie algebra $\grs$ is graded by winding number.
\end{prop}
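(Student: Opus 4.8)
The plan is to exploit the fact that, in contrast with the ordinary Goldman Lie algebra, the winding number is \emph{already} a homomorphism on the very group $\pi_R(S,p)$ in which the oriented loop product lives, so that both the well-definedness of the grading and its compatibility with the bracket follow almost formally.

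First I would descend $\omega$ to free regular homotopy classes. The winding number homomorphism $\omega : \pi_R(S,p)\to\ZZ/\chi$ of Definition \ref{RHdef} lands in an abelian group, hence is constant on conjugacy classes; applying the functor $\hat{\cdot}$ of Lemma \ref{conjlemma} (with target $\ZZ/\chi$, whose conjugacy classes are its own elements) yields a map $\hat\omega : \reghtpy(S)\to\ZZ/\chi$. By Smale's isomorphism \eqref{smaleeqn} the group $\pi_R(S,p)$ is independent of basepoint, so $\hat\omega$ is unambiguously defined on free regular homotopy classes. Invoking the grading principle of Section \ref{gradingsec}, the map $\hat\omega$ on the basis of $\grs=\ZZ\inp{\reghtpy(S)}$ determines a direct sum decomposition $\grs=\bigoplus_{a\in\ZZ/\chi}\grs_a$, where $\grs_a=\ZZ\inp{[\gamma]_R \mid \hat\omega([\gamma]_R)=a}$.

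Next I would verify compatibility with the Lie bracket \eqref{reggoldmanbracket}. Fix classes $[\alpha]_R\in\grs_a$ and $[\beta]_R\in\grs_b$. At each intersection point $p\in\alpha\cap\beta$ the oriented loop product is, by Definition \ref{loopproddef}, the product $[\alpha]_{\pi_R(S,p)}[\beta]_{\pi_R(S,p)}$ in $\pi_R(S,p)$. Since $\omega$ is a homomorphism and is conjugation-invariant, $\hat\omega([\alpha\cdot_p\beta]_R)=\omega([\alpha]_{\pi_R(S,p)})+\omega([\beta]_{\pi_R(S,p)})=a+b$, independently of $p$. Thus every summand $\epsilon_p(\alpha,\beta)[\alpha\cdot_p\beta]_R$ of $[[\alpha]_R,[\beta]_R]$ lies in $\grs_{a+b}$, whence $[\grs_a,\grs_b]\subset\grs_{a+b}$ and the bracket is graded.

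The point worth emphasizing is that no analogue of the delicate work of Section \ref{windgoldsec} is needed. For the free Goldman algebra, additivity of $\omega$ over the loop product held only at non-bigon crossings (Proposition \ref{wadditiveprop}), and the bigon-corner terms had to be dispatched by the cancellation of Remark \ref{NWRK}. In the regular setting additivity is \emph{unconditional}, because $\alpha\cdot_p\beta$ is genuinely the group multiplication in $\pi_R(S,p)$ on which $\omega$ is a homomorphism; consequently the only thing requiring any care is the passage to conjugacy classes, which is precisely what Lemma \ref{conjlemma} supplies. I therefore expect no real obstacle, and the proposition reduces to assembling these observations.
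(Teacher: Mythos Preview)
Your proposal is correct and follows essentially the same approach as the paper: the paper's ``proof'' is really just the paragraph of discussion preceding the proposition, which observes that since $\omega$ is already a homomorphism on $\pi_R(S,p)$, it descends to conjugacy classes $\reghtpy(S)$ via Lemma~\ref{conjlemma}, and the acrobatics with $\Phi$ and unobstructed representatives from Section~\ref{windgoldsec} are therefore unnecessary. Your write-up supplies more detail than the paper (in particular, you spell out the compatibility with the bracket and the contrast with Proposition~\ref{wadditiveprop}), but the underlying argument is identical.
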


In Def. \ref{zzactiondef} a $\ZZ$-action on $\hat{\pi}_R(S)$ was introduced. Here we note
that it is compatible with the bracket above.
\begin{lemma}\label{equivlem}
  The regular Goldman Lie bracket is $\ZZ$-equivariant
  $$t[[\a]_R,[\b]_R] = [t[\a]_R,[\b]_R] = [[\a]_R,t[\b]_R]\conj{ for } t\in \ZZ.$$
  \end{lemma}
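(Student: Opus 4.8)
The plan is to reduce the equivariance to the centrality of the fiber class $f$ in the Seifert presentation \eqref{spres}, using the reinterpretation of the $\ZZ$-action supplied by Remark \ref{smaleisormk}. Recall that the bracket \eqref{reggoldmanbracket} is a signed sum over intersection points and that $t$ acts linearly on $\grs = \ZZ\inp{\reghtpy(S)}$, so $t[[\a]_R,[\b]_R]$ means applying $t$ to each summand $[\a\cdot_p\b]_R$. The whole statement will therefore follow once the three sums are matched index-by-index.

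First I would arrange the representatives so that all three brackets are indexed by the same set of points carrying the same signs. By Definition \ref{zzactiondef} a fish-tail can be shrunk arbitrarily small and dragged along the curve by regular homotopy, so I choose a representative of $t\a$ whose fish-tail lies on an arc of $\a$ disjoint from $\b$. Then $(t\a)\cap\b = \a\cap\b$ as sets, and since $t\a$ coincides with $\a$ (tangent direction included) near each $p\in\a\cap\b$, the signs agree: $\epsilon_p(t\a,\b)=\epsilon_p(\a,\b)$. The identical construction applies to $t\b$. This reduces the claim to the pointwise identities
$$[(t\a)\cdot_p\b]_R \;=\; t[\a\cdot_p\b]_R \;=\; [\a\cdot_p(t\b)]_R \qquad\text{for each } p\in\a\cap\b.$$

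To prove these I pass through Smale's isomorphism. Basing all loops at $p$, the oriented loop product is multiplication in $\pi_R(S,p)$ by Definition \ref{loopproddef}, and $\phi$ carries this group onto $\pi_1(S(TS),p')$ while, by Remark \ref{smaleisormk}, carrying $t$ to multiplication by the fiber class $f$; in particular $[t\a]_R = f[\a]_R$ and $[t\b]_R = f[\b]_R$ in $\pi_R(S,p)$, a based identity independent of where the fish-tail is drawn. Hence $\phi([(t\a)\cdot_p\b]_R) = f\,\phi([\a]_R)\,\phi([\b]_R)$ and $\phi([\a\cdot_p(t\b)]_R) = \phi([\a]_R)\,f\,\phi([\b]_R)$. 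The presentation \eqref{spres} contains the relators $[f,a_i]$ and $[f,b_i]$, so $f$ is central; both expressions therefore equal $f\,\phi([\a]_R)\,\phi([\b]_R) = \phi(t[\a\cdot_p\b]_R)$. Applying $\phi^{-1}$ gives the pointwise identities, and summing over $p$ with signs yields $[t[\a]_R,[\b]_R] = t[[\a]_R,[\b]_R] = [[\a]_R,t[\b]_R]$.

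The main obstacle is the bookkeeping of the first step rather than the algebra of the second: I must be certain that positioning the fish-tail off of $\b$ creates no spurious intersection points and leaves every $\epsilon_p$ unchanged, and that reading all three products as based multiplication at the common point $p$ is legitimate. Once the intersection data is matched and each product is identified with multiplication in $\pi_R(S,p)$, the equality is immediate from the centrality of $f$, which is precisely the structural feature of \eqref{spres} being exploited.
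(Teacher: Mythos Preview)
Your argument is correct. The paper's proof is a single geometric sentence: a fish-tail attached to $\alpha\cdot_p\beta$ can be shrunk, slid along the curve, and parked on either the $\alpha$-arc or the $\beta$-arc, and this sliding commutes with the local resolution of Figure~\ref{fig:loopprod}. You instead first pin down the intersection data by positioning the fish-tail away from $\beta$, and then prove the pointwise identity $[(t\a)\cdot_p\b]_R=t[\a\cdot_p\b]_R=[\a\cdot_p(t\b)]_R$ by transporting everything through Smale's isomorphism and invoking the centrality of $f$ from \eqref{spres}. These are two faces of the same coin---Remark~\ref{smaleisormk} is exactly the dictionary that turns ``slide the fish-tail past any arc'' into ``$f$ is central''---so the underlying content is identical. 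What your route buys is a cleaner separation between the combinatorial bookkeeping (matching intersection points and signs) and the algebraic core; what the paper's route buys is brevity, since the geometric picture makes both steps visible at once.
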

  \begin{proof}
A generator $t\in \ZZ$ acts on any term $[\alpha\cdot_p\beta]_R$ in Eqn. \eqref{reggoldmanbracket} by adding a fish-tail somewhere along the curve. This fish-tail can be smoothly and regularly isotoped to be small and then moved along the curve to either $\alpha$ or $\beta$. This commutes with the resolution in Fig. \ref{fig:loopprod}.
    \end{proof}

 The results of Section \ref{constructionsec}, relating the set of free
 homotopy classes of curves $\freehtpy(S)$ to the set of regular homotopy
 classes of curves $\reghtpy(S)$, can be extended to describe the
 relationship between the Goldman Lie algebra $\g(S)$ and the regular
 Goldman Lie algebra $\grs$.

 \begin{defn}
   Suppose $\g$ is a Lie algebra over a commutative ring $k$ and $R$ is a commutative $k$-algebra then $\g\ott_k R$ is a Lie algebra with product
\begin{equation}\label{exteqn}
  [x\ott r,y\ott s] := [x,y]\ott rs.
\end{equation}  
The {\em loop algebra} is 
$$\cL\g := \g \ott_k k[t,t^{-1}].$$
   \end{defn}

The loop algebra also has a canonical $\ZZ$-action generated by $t$; $t\cdot(x\ott t^n) = x\ott t^{n+1}$. This suggests a relationship between the loop algebra of the Goldman Lie algebra $\cL\gs$ and the regular Goldman Lie algebra $\grs$. The next theorem addresses this point.

\begin{thm}\label{reggoldmanthm}
There is a $\ZZ$-equivariant Lie algebra isomorphism
$$\a :  \cL\gs \xto{\sim} \grs$$
from the loop algebra of the Goldman Lie algebra to the regular Goldman Lie algebra.
\end{thm}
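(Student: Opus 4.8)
The plan is to define $\a$ on basis elements, verify it is a $\ZZ$-equivariant isomorphism of $\ZZ$-modules, and then upgrade this to a Lie isomorphism by a degree argument. On generators I would set
$$\a([\ga]\ott t^n) := [t^n\Phi([\ga])]_R,$$
using the distinguished unobstructed representative $\Phi([\ga])$ (and $\Phi(C)$ for the contractible class) supplied by Proposition \ref{correspprop}. That $\a$ is a bijection of bases is exactly the content of Proposition \ref{eqprop}: every fiber $s_*^{-1}([\ga])$ is a free $\ZZ$-torsor, and $\Phi([\ga])$ selects a basepoint in it, so $\hat{\pi}_R(S)\cong\hat{\pi}(S)\times\ZZ$ via $t^n\Phi([\ga])\leftrightarrow([\ga],n)$. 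Extending $\ZZ$-linearly gives a module isomorphism $\a:\cL\gs\xto{\sim}\grs$, and $\ZZ$-equivariance is immediate since $\a(t\cdot([\ga]\ott t^n))=[t^{n+1}\Phi([\ga])]_R=t\cdot\a([\ga]\ott t^n)$.

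To see that $\a$ respects brackets I would first reduce to the degree-zero case. Both brackets are $\ZZ$-equivariant — on $\cL\gs$ by the loop-algebra formula \eqref{exteqn}, and on $\grs$ by Lemma \ref{equivlem} — so writing $[\ga]\ott t^a=t^a([\ga]\ott 1)$ and pulling the powers of $t$ outside reduces the identity $\a[X,Y]=[\a X,\a Y]$ to the single check
$$\a\big([[\ga],[\tau]]\ott 1\big)=[\,\Phi([\ga]),\Phi([\tau])\,]_R.$$
Here the cleanest route is structural. Let $\grs^{(0)}\subset\grs$ be the span of the unobstructed classes together with $\Phi(C)$, i.e. the image of $\gs\ott 1$ under $\a$. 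The projection $s_*:\grs\to\gs$ is a Lie algebra homomorphism, since it preserves the oriented loop product by Proposition \ref{orloopprop} and visibly preserves intersection signs, and its restriction $s_*\colon\grs^{(0)}\to\gs$ is a module isomorphism inverse to $\a|_{\gs\ott 1}$. Thus, provided $\grs^{(0)}$ is closed under the bracket, $\a|_{\gs\ott 1}$ is a Lie isomorphism onto $\grs^{(0)}$, and its $\ZZ$-equivariant extension $\a$ is a Lie isomorphism. So everything comes down to showing that the regular bracket of two unobstructed curves lands in degree zero.

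Examining $[\Phi([\ga]),\Phi([\tau])]_R=\sum_q\e_q[\Phi([\ga])\cdot_q\Phi([\tau])]_R$ term by term, there are three kinds of crossings $q$. The terms coming from the two corners of a bigon carry opposite signs (Remark \ref{NWRK}), and I would check that their resolutions are \emph{equal} as regular classes by sliding the resolved strand across the bigon disk — a regular homotopy — so that such pairs cancel. For a non-bigon crossing whose product is unobstructed, the term is already a degree-zero class by Corollary \ref{phihom}. The remaining, and genuinely delicate, case is a non-bigon crossing at which $\Phi([\ga])\cdot_q\Phi([\tau])$ is nulhomotopic (the second alternative in Lemma \ref{prodlem}); I must show its regular class is exactly $\Phi(C)$, i.e. that it carries no spurious fish-tails. (The contractible class itself is handled separately: the small figure-eight $\Phi(C)$ may be shrunk into a disk disjoint from any $\Phi([\tau])$, so it is central in $\grs$.)

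This last point is where I expect the real work to be. A nulhomotopic product forces the based classes to be inverse, hence globally $[\tau]=[\ga]^{-1}$; since reversing orientation negates the winding number, $\omega(\Phi([\ga]))+\omega(\Phi([\tau]))=0$ in $\ZZ/\chi$, which pins down the class of the nulhomotopic product only modulo $\chi$. To promote this to an exact equality with $\Phi(C)$, I would pass to the universal cover: since $\Phi([\ga])$ and $\Phi([\tau])=\Phi([\ga]^{-1})$ are unobstructed with a common geodesic axis, the nulhomotopic product lifts to a loop that runs along this axis and returns along its reverse, and such a forward-and-back loop is regularly homotopic in the plane to the figure-eight $\Phi(C)$ of turning number zero. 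Making this turning-number computation rigorous — together with the regular-homotopy sliding move used for the bigon cancellation — is the crux of the proof; once both geometric facts are in place, the degree-zero closure of the bracket follows and the theorem is proved.
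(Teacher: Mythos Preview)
Your definition of $\a$, the bijection via Proposition~\ref{eqprop}, and the reduction to degree zero using $\ZZ$-equivariance (Lemma~\ref{equivlem} and Eqn.~\eqref{exteqn}) are exactly what the paper does. The structural idea of showing $\grs^{(0)}$ is closed under the bracket, together with the observation that $s_*$ is a Lie homomorphism (Proposition~\ref{orloopprop}), is also how the paper organizes the argument, though the paper writes out the chain of equalities directly rather than phrasing it as closure of a subalgebra.

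The one place where your route diverges is the ``delicate case'' of a nulhomotopic product, and here you are working much harder than necessary. The paper never attempts to identify the regular class of $\Phi([\ga])\cdot_q\Phi([\tau])$ when it is contractible. Instead, Lemma~\ref{dhom}(2), proved immediately after the theorem, shows that if \emph{any} product $\ga\cdot_p\tau$ is contractible then $\ga$ and $\tau$ have geometric intersection number zero (shrink both along concentric circles in the disk bounding the nulhomotopy). Since the bracket is independent of the choice of representatives, one then recomputes it with disjoint representatives and both sides vanish. This eliminates the nulhomotopic case outright, with no need for the universal-cover turning-number computation you outline. Your approach might be completable, but it is not needed.

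Your bigon-sliding argument is correct and actually fills a small gap in the paper's exposition: the paper invokes Remark~\ref{NWRK} to drop bigon corners, but that remark only establishes cancellation in $\gs$, not in $\grs$. Either your regular-homotopy slide across the bigon disk, or a choice of representatives in minimal position (so that there are no bigons to begin with), is required to match the non-bigon sum with the full regular bracket $[\Phi([\ga]),\Phi([\tau])]_R$.
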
   
\begin{proof}
There is a map $\a : \cL\gs \to \grs$ and an inverse map $\b : \grs \to \cL\gs$ which are determined by
$$\a([\ga] \ott t^n) := t^n\Phi([\ga]) \conj{ and } \b([\tau]_R) := s_*([\tau]_R)\ott t^{d([\tau]_R)}$$
where $d([\tau]_R)$, by Prop. \ref{eqprop}, is the unique function $d : \hat{\pi}_R(S)\to \ZZ$ which satisfies the equation
\begin{equation}\label{ddef}
  [\tau]_R = t^{d([\tau]_R)}\Phi(s_*([\tau]_R)).
\end{equation}
The value of $d([\tau]_R)$ is the number of fish-tails needed to make the unobstructed representative $\Phi(s_*[\tau]_R)$ regularly homotopic to $[\tau]_R$. In particular, $d(\Phi([\ga])) = 0$ for all $[\ga]\in\hat{\pi}(S)$.

The map $d$ is $\ZZ$-equivariant because 
$t^n[\tau]_R = t^{d([\tau]_R)+n}\Phi(s_*([\tau]_R)) = t^{d([\tau]_R)+n}\Phi(s_*(t^n[\tau]_R))$. Which gives
$d(t^n[\tau]_R) = d([\tau]_R) +n$ for $n\in\ZZ$ from Eqn. \eqref{ddef}. 

The map $\b$ is $\ZZ$-equivariant because $\b(t^n[\tau]_R) = s_*(t^n[\tau]_R)\ott t^{d(t^n[\tau]_R)} = s_*([\tau]_R)\ott t^{d(t^n[\tau]_R)} = s_*([\tau]_R)\ott t^nt^{d([\tau]_R)} = t^n\b([\tau]_R)$.

Also the map $\a$ is $\ZZ$-equivariant. This is because $\a(t^n\cdot([\ga]\ott t^m)) = \a([\ga]\ott t^{n+m}) = t^{n+m}\Phi([\ga]) = t^n(t^m\Phi([\ga])) = t^n\a([\ga]\ott t^m)$.

Next we show $\b\a = 1_{\cL\gs}$.
\begin{align*}
  \b\a([\ga]\ott t^n) &= \b(t^n\Phi([\ga])) & (\normaltext{Def. of } \a)\\
                      &= t^n\b(\Phi([\ga])) & (\normaltext{Equivariance})\\
                      &= t^n\cdot(s_*(\Phi([\ga])) \ott t^{d(\Phi([\ga]))}) & (\normaltext{Def. of } \b)\\
                      &= s_*(\Phi([\ga])) \ott t^{n+d(\Phi([\ga]))} & (\ZZ\normaltext{-action})\\
                      &= [\ga] \ott t^{n+d(\Phi([\ga]))} & (s_*\Phi=1)\\
                      &= [\ga] \ott t^n & (d(\Phi([\ga])) = 0)
  \end{align*}
where  $d(\Phi([\ga])) = 0$ is discussed above.

Now we show that $\a\b = 1_{\grs}$.
\begin{align*}
  \a\b([\tau]_R) &= \a(s_*([\tau]_R)\ott t^{d([\tau]_R)}) & (\normaltext{Def. of } \b)\\
                 &= t^{d([\tau]_R)}\Phi(s_*([\tau]_R)) & (\normaltext{Def. of } \a)\\
  &= [\tau]_R & \eqref{ddef} 
  \end{align*}

Next we show that the map $\a$ is a Lie algebra homomorphisms. 
If $[\ga]$ is contractible then $[[\ga],[\tau]]=0$ for any $[\tau]$, but $\a([\ga]\ott t^n) = t^n\Phi([\ga])$ so that $[\a([\ga]),\a([\tau])]=0$ too. So we assume that $[\ga]$ and $[\tau]$ are not nulhomotopic. In the equations below, all of the sums are taken over $p \in \ga\cap\tau$ and $\e_p = \e_p(\ga,\tau)$; so, without loss of generality, we assume that no $p$ is the corner of a bigon, see Rmk. \ref{NWRK} and Lem. \ref{dhom} below.
\begin{align*}
  \a([[\ga]\ott t^n,[\tau]\ott t^m]) &=\a(\Sigma_{p} \e_p [\ga\cdot_p\tau])\ott t^{n+m} & \eqref{goldmanbracket}\\ 
  &=t^{n+m} \Phi(\Sigma_{p} \e_p [\ga\cdot_p\tau]) & (\textnormal{Def. of } \a)\\
  &=t^{n+m} \Sigma_{p} \e_p \Phi[\ga\cdot_p\tau] & (\textnormal{Linearity})\\
  &=t^{n+m} \Sigma_{p} \e_p [\Phi([\ga])\cdot_p\Phi([\tau])] & (\textnormal{Cor. } \ref{phihom})\\
  &=t^{n+m} [\Phi([\ga]),\Phi([\tau])] & \eqref{goldmanbracket}\\
  &=[t^n \Phi([\ga]),t^m\Phi([\tau])] & (\textnormal{Lem. } \ref{equivlem})\\
&=[\a([\ga]\ott t^n),\a([\tau] \ott t^m)] & (\textnormal{Def. of } \a )
\end{align*}

The inverse $\b$ of a bijective Lie algebra homomorphism $\a$ is necessarily a Lie algebra homomorphism because $\b[x,y] = \b[\a\b(x),\a\b(y)] = \b\a[\b(x),\b(y)] = [\b(x),\b(y)]$. For completeness, we include a direct proof below.
\begin{align*}
  \b[[\ga]_R,[\tau]_R] &= \b(\Sigma_{p} \e_p [\ga\cdot_p\tau]_R) & \eqref{reggoldmanbracket}\\
  &=\Sigma_{p} \e_p \b[\ga\cdot_p\tau]_R & (\textnormal{Linearity})\\
  &=\Sigma_{p} \e_p s_*([\ga\cdot_p\tau]_R) \ott t^{d([\ga\cdot_p\tau]_R)} & (\textnormal{Def. of } \b)\\
&=\Sigma_{p} \e_p s_*([\ga]_R)\cdot_p s_*([\tau]_R) \ott t^{d([\ga]_R)+d([\tau]_R)} & (\textnormal{Prop. } \ref{orloopprop}\,\&\, \textnormal{Lem. } \ref{dhom})\\
&=[s_*([\ga]_R), s_*([\tau]_R)]\ott t^{d([\ga]_R)+d([\tau]_R)} & \eqref{goldmanbracket} \\
&=[s_*([\ga]_R)\ott t^{d([\ga]_R)}, s_*([\tau]_R)\ott t^{d([\tau]_R)}] & \eqref{exteqn}\\
&=[\b[\ga]_R,\b[\tau]_R] & \eqref{goldmanbracket}
\end{align*}

  \end{proof}


The lemma below is included to justify a step in the proof that $\b$ is a homomorphism.

  \begin{lemma}\label{dhom}
Suppose $[\ga]_R,[\eta]_R \in \hat{\pi}_R(S)$, $\ga \in [\ga]_R$, $\eta\in [\eta]_R$ and $p\in \ga\cap\eta$. Then
    \begin{enumerate}
    \item If $\ga\cdot_p \eta$ is not homotopic to the contractible curve and $p$ is not the corner of a bigon then $d$ is a homomorphism,
      $d([\ga \cdot_p \eta]_R) = d([\ga]_R) + d([\eta]_R)$.
    \item If $\ga$, $\eta$ or, for any $p\in \ga\cap \eta$, $\ga\cdot_p \eta$ is homotopic to the contractible curve then $[[\ga]_R,[\eta]_R] = 0$.
       \end{enumerate}
  \end{lemma}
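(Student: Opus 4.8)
The plan is to treat the two parts separately, reducing each to the behaviour of the section $\Phi$ and the $\ZZ$-action $t$ on unobstructed representatives. Three facts will be used repeatedly. First, $d$ is $\ZZ$-equivariant, $d(t^k[\tau]_R)=k+d([\tau]_R)$, which is immediate from \eqref{ddef} together with the freeness of the $\ZZ$-action in Proposition \ref{eqprop}. Second, the single-crossing loop product is itself $\ZZ$-equivariant, $t[\ga\cdot_p\eta]_R=[(t\ga)\cdot_p\eta]_R=[\ga\cdot_p(t\eta)]_R$; this is exactly the fish-tail-sliding argument already used in the proof of Lemma \ref{equivlem}. Third, $s_*$ intertwines the loop products, by Proposition \ref{orloopprop}.

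For Part 1, write $a=d([\ga]_R)$, $b=d([\eta]_R)$, $[\bar\ga]=s_*([\ga]_R)$ and $[\bar\eta]=s_*([\eta]_R)$, so that $[\ga]_R=t^a\Phi([\bar\ga])$ and $[\eta]_R=t^b\Phi([\bar\eta])$ by \eqref{ddef}. Applying single-crossing equivariance in each slot gives
$$[\ga\cdot_p\eta]_R=t^{a+b}\bigl(\Phi([\bar\ga])\cdot_p\Phi([\bar\eta])\bigr),$$
so by equivariance of $d$ it remains to show $d\bigl(\Phi([\bar\ga])\cdot_p\Phi([\bar\eta])\bigr)=0$, i.e. that this product is already the unobstructed representative of its class. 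Since $\ga,\eta$ differ from their unobstructed models only by fish-tails, which can be slid away from $p$ and from the opposite strand, the crossing $p$ remains a non-bigon crossing of $\Phi([\bar\ga])$ and $\Phi([\bar\eta])$; moreover $s_*([\ga\cdot_p\eta]_R)=[\bar\ga]\cdot_p[\bar\eta]$ is non-contractible by hypothesis. When $[\bar\ga],[\bar\eta]\neq C$, Lemma \ref{prodlem} forces the product to be unobstructed, and Corollary \ref{phihom} identifies it with $\Phi(s_*([\ga\cdot_p\eta]_R))$, giving $d=0$ and hence $d([\ga\cdot_p\eta]_R)=a+b$. If one factor is contractible, say $[\bar\ga]=C$, the product is obstructed, but the two lobes of the infinity curve $\Phi(C)$ are fish-tails of opposite sign and cancel, changing the $t$-power by $(+1)+(-1)=0$, so again $d=0$.

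For Part 2, I first pass to unobstructed models: Lemma \ref{equivlem} gives $[[\ga]_R,[\eta]_R]=t^{\,a+b}[\Phi([\bar\ga]),\Phi([\bar\eta])]$, and since $t$ acts freely it suffices to make the bracket of the unobstructed models vanish. If $[\bar\ga]=C$ (the case $[\bar\eta]=C$ being symmetric), the infinity curve $\Phi(C)$ can be shrunk inside a disk and translated off the opposite curve by a regular homotopy within its class, so with disjoint representatives the defining sum is empty and the bracket is $0$. If instead some resolution $\ga\cdot_p\eta$ is contractible, then $[\ga]_{\pi_1,p}[\eta]_{\pi_1,p}=1$ at that crossing, whence $[\bar\eta]=[\bar\ga]^{-1}$ as free homotopy classes; by Lemma \ref{uniquenesslemma} the unobstructed model of $[\bar\ga]^{-1}$ may be taken to be the orientation-reversed parallel push-off of $\Phi([\bar\ga])$. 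Such a push-off meets $\Phi([\bar\ga])$ only in pairs of crossings bounding small bigons near the self-intersections of $\Phi([\bar\ga])$, and by Remark \ref{NWRK} the two summands of each such pair cancel, so the bracket vanishes.

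The main obstacle is the contractible-resolution case. The naive route—collecting the contractible summands $\e_p\,t^{d_p}\Phi(C)$ and arguing their signed total is zero—fails directly, because the integers $d_p$ need not agree from crossing to crossing: Proposition \ref{wadditiveprop} pins the winding number down only modulo $\chi$, whereas the genuine turning number of the contractible resolution $\ga\cdot_p\eta$ can vary with the index of the zeros of the chosen vector field that it encloses. This is exactly why the plan above routes around the $d_p$ entirely, replacing the signed-count argument with the free-homotopy identity $[\bar\eta]=[\bar\ga]^{-1}$ and the push-off/bigon cancellation. The one genuinely geometric point I would still verify carefully is that a parallel push-off of an immersed unobstructed curve meets it only in canceling bigon pairs.
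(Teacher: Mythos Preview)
Your Part 1 is the paper's argument: reduce by $\ZZ$-equivariance of $d$ and of the single-crossing product to unobstructed representatives, then apply Lemma \ref{prodlem}. The contractible-factor digression you add is unnecessary; the paper omits it because in the only application (Theorem \ref{reggoldmanthm}) both factors are assumed non-contractible before Part 1 is invoked.

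Part 2 takes a different route from the paper. For a contractible factor both arguments agree (shrink and separate). For the contractible-resolution case the paper does \emph{not} pair crossings into bigons; it argues that the geometric intersection number of the underlying free homotopy classes is zero---by shrinking $\ga$ and $\eta$ along the concentric circles of the disk bounding the nulhomotopic $\ga\cdot_p\eta$---so one may choose disjoint free-homotopy representatives, then reattach whatever fish-tails are needed to land in the prescribed regular classes without creating intersections, leaving the bracket an empty sum.

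The gap you flag in your push-off argument is genuine, not just a verification. Near a self-intersection of $\Phi([\bar\ga])$ the local model is two transverse strands; the reversed push-off meets $\Phi([\bar\ga])$ in two nearby points $A$, $B$ of opposite sign, but there is no short arc of $\Phi([\bar\ga])$ from $A$ to $B$: the immersed curve passes straight through the crossing on each strand, so $A$ (on one strand) and $B$ (on the other) are joined only by global arcs. Hence there is no local bigon with corners $A$, $B$, and no evident reason the resolutions $[\ga\cdot_A\eta]_R$ and $[\ga\cdot_B\eta]_R$ should agree as regular classes. Salvaging this pairing would require a separate global argument---effectively re-deriving the disjoint-representative statement that the paper uses directly.
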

  \begin{proof}
{\it 1.} By equivariance, we may assume,  without loss of generality, that $\ga$ is unobstructed and so $d([\ga]_R) = 0$. Set $n=-d([\eta]_R)$ so that $d(t^n[\eta]_R) = 0$ and $t^n\eta$ is unobstructed. Now Lem. \ref{prodlem}  shows $\ga \cdot t^n\eta$ is unobstructed or $\ga \cdot_p t^n\eta$ is nulhomotopic. So by assumption, $\ga \cdot t^n\eta$ must be unobstructed and therefore $d([\ga \cdot t^n\eta]_R) = 0$. Since $d$ is $\ZZ$-equivariant, $d(t^n [\eta]_R) = d([\ga \cdot t^n\eta]_R)$ implies $d([\ga \cdot_p \eta]_R) = d([\ga]_R) + d([\eta]_R)$.

{\it 2.} If $\ga$ or $\eta$ are contractible then we can make them small and distant, this shows that the geometric intersection number is zero.
If $\ga \cdot_p \eta$ is contractible then shrinking $\ga$ and $\eta$ along concentric circles $S_r = \{ z : \vnp{z} = r \}$ in the disk $D^2 = \{ z \in \CC : \vnp{z} \leq 1\}$ defining the nulhomotopy shows that their geometric intersection number is zero. For any other basepoint $q$, $\pi_R(S,q) = y^{-1} \cdot \pi_R(S,p) \cdot y$, for a path $y : q\to p$, and conjugation takes identity to identity.
\end{proof}

\begin{rmk}\label{cyclicgrrmk}
  If the function $d$ was always a homomorphism then it would induce a
  homomorphism $\pi_R(S,p)\to \ZZ$ which, by virtue of factoring through the
  abelianization $\pi_R(S,p)^{ab}$, must factor through a homomorphism
  $\ZZ/\chi\to \ZZ$.

Here is a more concrete example.  Suppose that the genus of $S$ is $2$,
so $\chi = -2$.  Set $x=[a_1,b_1]$ and $y=[a_2,b_2]$ in
Eqn. \eqref{pipres}. So that $xy = r$. Now $d(x) = 0$ and $d(y) = 0$ because
they are unobstructed curves.  But $0=d(xy) = d(r) = d(f^2) = 2$ because of
the relation $r = f^{2}$ from Eqn. \eqref{spres}.
\end{rmk}

\begin{question}
  Theorem \ref{reggoldmanthm} suggests that $\grs$ is an analogue of
  the loop algebra $\cL\gs$ of the Goldman Lie algebra.  Is there a central
  extension of $\hat{\g}_R(S)$ corresponding to an affinization of the
  Goldman Lie algebra?
  \end{question}

\section{Induced grading on HOMFLY-PT algebras}\label{homflyptsec}

In this section we use the grading on the regular Goldman Lie algebra to
construct a grading on the HOMFLY-PT skein algebra. We first recall some
definitions.

\begin{defn}[Conway triple]
Let $L_+$, $L_-$ and $L_0$ be link diagrams on a surface, the triple $(L_+,L_-,L_0)$ is called a {\em Conway triple} if $L_+$, $L_-$, and $L_0$ are identical outside of some small neighborhood $U$, and within $U$ appear as in Figure \ref{fig:triple} below.
\end{defn}

\begin{figure}[h]  
\centering
\begin{center}
\begin{overpic}[scale=0.7]
{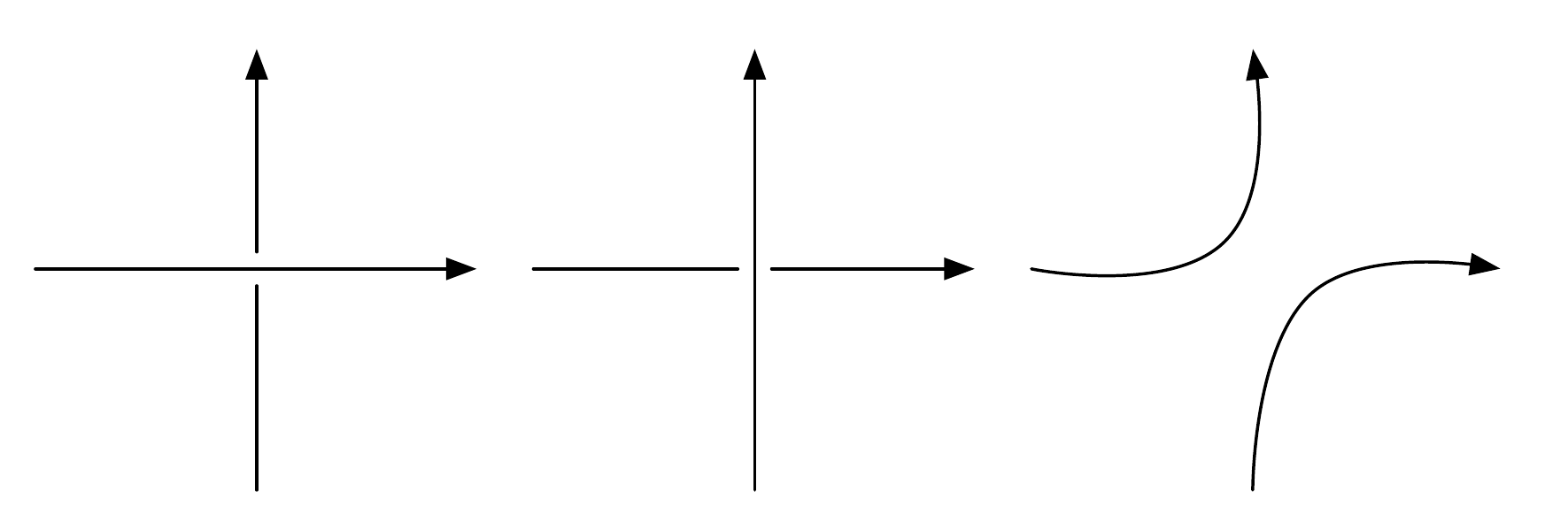}
\put(54,-5){$L_+$}
\put(168,-5){$L_-$}
\put(281,-5){$L_0$}
\end{overpic}
\end{center}
  \caption{Diagram showing the difference between $L_+,L_-,L_0$ within $U$}
\label{fig:triple}
\end{figure}

A skein algebra is usually defined as a quotient of either framed isotopy
classes of framed oriented links in a thickening of the surface $S\times\I$
or oriented link diagrams considered up to Reidemeister moves II and III by
a relation among the Conway triples introduced above.  The equivalence
between these two approaches is discussed by L. Kauffman in \cite{Kauffman}.

We'll stick with link diagrams.

\begin{defn}[HOMFLY-PT skein algebra]\label{skalgdef}
Let $\aL(S)$ denote the set of oriented link diagrams on a surface $S$. Let $R=\mathbb{Z}[x^{\pm 1},q^{\pm 1},v^{\pm 1}]$. The {\em HOMFLY-PT skein algebra} $\hqs$ is the quotient of the $R$-span of $\aL(S)$ by the three types of relations below.
\begin{enumerate}
\item When $(L_+,L_-,L_0)$ is a Conway triple,
$$x \CPPic{ncross} -\quad x^{-1} \CPPic{pcross} = (q-q^{-1})\CPPic{orres}$$

\item When $L_2$ is obtained from $L_1$ via a Reidemeister I untwisting move 
$$\CPPic{borres2} = \quad v \CPPic{corres2} \conj{ and }\quad \CPPic{borres1} = \quad v^{-1} \CPPic{corres1}$$

\item If $L_2$ is obtained from $L_1$ via a Reidemeister II or III move then they are required to be equal:  $L_1=L_2$.
\end{enumerate}
\end{defn}

For more details, see \cite{morton}.

\newcommand{\oh}{\omega_H}
\newcommand{\p}{\rho}
\newcommand{\proj}{\p_*}
\newcommand{\B}{B}

Now the natural projection $\p : S\times \I \to S$ takes framed oriented knots in $S\times \I$ to immersed curves in $S$. A regular framed isotopy between two knots is mapped by $\p$ to a regular homotopy between their projections. By Kauffman's work, projection defines a map 
$\p_* : \aK(S) \to \hat{\pi}_R(S)$
from oriented knot diagrams $\aK(S)$ on $S$ to $\hat{\pi}_R(S)$ which preserves Reidemeister moves II and III. An oriented crossing projects to a self-intersection
$$\CPPic{pcross} \rightsquigarrow \CPPic{spcross}.$$
This map extends to a map on links
$$\p_* : \aL(S) \to \ZZ\inp{\hat{\pi}_R(S)}$$
by setting $\p_*(L) := \sum_i \p_*(L_i)$ when $L=\sqcup_i L_i$ for $L_i\in \aK(S)$.

By construction $\hqs$ is $\ZZ$-spanned by elements of the form 
$$\B(S) := \{ x^n v^m q^l L  : n,m,l\in \ZZ \textnormal{ and } L\in \aL(S)\}.$$
The winding number is defined in terms of this spanning set
\begin{equation}\label{skgreq}
  \omega_H(x^n v^m q^l L) := m+\omega_R(\p_*(L))
  \end{equation}
where $\omega_R :\ZZ\inp{\hat{\pi}_R(S)}\to\ZZ/\chi$ is the linear extension of Eqn. \eqref{windingnumbereq}. 
The proposition below shows that this definition respects the relations in Def. \ref{skalgdef}.

\begin{prop}\label{relprop}
The map $\omega_H : \B(S) \to \ZZ/\chi$ from Eqn. \eqref{skgreq} extends to the skein algebra $\hqs$ along the map $\B(S) \to\hqs$.

\end{prop}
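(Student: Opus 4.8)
The plan is to show that $\omega_H$ descends to $\hqs$ by verifying that each of the three defining relations in Definition \ref{skalgdef} is homogeneous with respect to $\omega_H$; that is, every monomial $x^n v^m q^l L$ appearing in a given relation has the same value under $\omega_H$. Since $\hqs$ is the quotient of the free $R$-module on diagrams by the relations generated by these local moves, and since $\omega_H$ is already defined on the spanning set $\B(S)$, homogeneity of the generators will force the defining relations to be homogeneous and hence $\omega_H$ to descend to a grading. Observe at the outset that in the formula $\omega_H(x^n v^m q^l L) = m + \omega_R(\p_*(L))$ the variables $x$ and $q$ carry winding degree $0$ while $v$ carries winding degree $1$; the whole point is that the framing variable $v$ bookkeeps exactly the fish-tail contribution to the winding number.

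First I would dispatch the skein relation and the Reidemeister II/III relations, where no $v$ appears and so only $\omega_R(\p_*(-))$ must be compared. The projection $\p : S\times\I\to S$ forgets the over/under datum at a crossing, so $\p_*(L_+)$ and $\p_*(L_-)$ are literally the same immersed diagram in $S$ and hence represent the same class in $\hat{\pi}_R(S)$; in particular $\omega_R(\p_*(L_+)) = \omega_R(\p_*(L_-))$. For the resolved term, Corollary \ref{rescor} states that a crossing may be resolved without changing the winding number of the associated regular homotopy class (with no bigon hypothesis needed in the regular case), so $\omega_R(\p_*(L_0))$ agrees with both. As all three terms of the skein relation carry $v$-exponent $0$, they share a common $\omega_H$-value. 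The Reidemeister II and III relations are even easier: by construction $\p_*$ sends these moves to regular homotopies of the projected curve, under which $\omega_R$ is invariant, and again no $v$ is introduced.

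The crux is the Reidemeister I relation, which is where the degree-$1$ assignment to $v$ earns its keep. A Reidemeister I kink projects under $\p_*$ to a fish-tail on the immersed curve. By Definition \ref{zzactiondef} (equivalently Corollary \ref{fishtailpm1}), adding a positive fish-tail raises $\omega_R$ by $1$ and adding a negative fish-tail lowers it by $1$. Thus if $L_1$ carries a kink that untwists to $L_2$, then $\omega_R(\p_*(L_1)) = \omega_R(\p_*(L_2)) \pm 1$, with the sign matching that of the kink. On the other side of the relation the untwisted diagram is multiplied by $v^{\pm 1}$, contributing exactly $\pm 1$ to the $v$-exponent $m$. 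Comparing, $\omega_H(L_1) = \omega_R(\p_*(L_2)) \pm 1 = (\pm 1) + \omega_R(\p_*(L_2)) = \omega_H(v^{\pm 1} L_2)$, so both sides agree. This exact cancellation between the writhe-correction $v$ and the fish-tail winding shift is the heart of the proposition.

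The main obstacle to watch is that the relations are applied locally inside a disk $U$ while the rest of the diagram is held fixed, and that the oriented resolution in the skein relation may change the number of link components (a self-crossing splitting into two, or two components merging). For the bookkeeping I would note that $\omega_R$ is extended additively over $\ZZ\inp{\hat{\pi}_R(S)}$ and is additive over the oriented loop product by Proposition \ref{wadditiveprop}, so the merging or splitting of components is precisely accounted for and the unchanged part of the diagram contributes identically to every term; consequently the local homogeneity checks above propagate to the generated relations. With all three families of relations shown to be $\omega_H$-homogeneous, $\omega_H$ passes to the quotient and endows $\hqs$ with the claimed $\ZZ/\chi$-grading.
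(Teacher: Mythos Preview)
Your proof is correct and follows essentially the same route as the paper: check homogeneity of the three families of relations by noting that $\p_*(L_+)=\p_*(L_-)$, that the resolved term $L_0$ has the same winding by Corollary~\ref{rescor} (the paper invokes the equivalent geometric argument of Proposition~\ref{wadditiveprop}), that Reidemeister~II/III project to regular homotopies, and that the $v^{\pm 1}$ in Reidemeister~I exactly offsets the $\pm 1$ fish-tail shift from Corollary~\ref{fishtailpm1}. Your final paragraph on component merging/splitting is a point the paper leaves implicit; note only that the unconditional additivity you need there is really the statement that $\omega$ is a homomorphism on $\pi_R(S,p)$ (the first proof of Proposition~\ref{wadditiveprop}), since the proposition's stated hypotheses on unobstructedness and bigons are not available in the regular setting.
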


\begin{proof}
In Def. \ref{skalgdef}, $\hqs$ is defined to be a quotient of $\ZZ\inp{\B(S)}$ by three types of relations. In order to show that $\omega_H$ extends to the quotient, we check that it respects each of these three types of relations.

In the first case the relation involves a Conway triple
$(L_+,L_-,L_0)$. Since $L_+$ and $L_-$ only differ at a crossing, their
projections $\p_*(L_+) = \p_*(L_-)$ are identical and this implies 
$$\omega_H(xL_+) = \omega_H(x^{-1}L_-).$$
The geometric argument for Prop. \ref{wadditiveprop} shows that $\omega_R(\p_*(L_+)) = \omega_R(\p_*(L_0))$, so
$$\omega_H(qL_0)=\omega_H(q^{-1}L_0)=\omega_H(xL_+).$$
Thus all of terms in the skein relation are in the same graded degree.

In the second case, suppose that $L_2$ is obtained from $L_1$ via a
Reidemeister I twist, so that $L_1 = vL_2$ in $\hqs$. By Corollary
\ref{fishtailpm1}, it follows the winding number of their projections differ
by 1,
$$\omega_R(\p_*(L_1)) - 1 = \omega_R(\p_*(L_2))$$
So $\omega_H(L_1)=\omega_H(vL_2)$.

Lastly, suppose that $L_2$ is obtained from $L_1$ by a Reidemeister II or
III move. Then by Kauffman's work, $L_1$ and $L_2$ are regularly isotopic
and so their projections are regularly freely homotopic. Thus in $\g_R(S)$,
$\proj(L_1)=\proj(L_2)$.
It follows that  $\omega_H(L_1)=\omega_H(L_2)$.
\end{proof}

The theorem below uses the winding map determined by the proposition above to define a grading on the skein algebra.

\begin{theorem}
  The HOMFLY-PT skein algebra $\hqs$ is graded as a ring by the winding number. In more detail, there is a decomposition
  $$\hqs = \bigoplus_{a\in\ZZ/\chi} \hqs_a$$
which respects the product: if $l\in \hqs_a$ and $l'\in \hqs_b$ then $l\cdot l ' \in \hqs_{a+b}$ via the map
$$\hqs_a \ott_{\ZZ[x^{\pm 1}, q^{\pm 1}]} \hqs_b \xto{\cdot} \hqs_{a+b}$$
  \end{theorem}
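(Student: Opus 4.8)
The direct-sum decomposition is essentially already in hand. Proposition \ref{relprop} shows that $\omega_H$ descends from the spanning set $\B(S)$ to a well-defined map $\omega_H : \hqs \to \ZZ/\chi$; what makes this work is that \emph{each} of the three defining relations of Definition \ref{skalgdef} is winding-homogeneous, i.e. all of its terms lie in a single winding degree. Consequently the decomposition of the free module $\ZZ\inp{\B(S)}$ into its winding-degree pieces passes to the quotient, and I would simply set $\hqs_a$ to be the image of the degree-$a$ part, obtaining $\hqs = \bigoplus_{a\in\ZZ/\chi}\hqs_a$. So the only remaining content of the theorem is that this grading is multiplicative.

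The plan is to reduce multiplicativity to a single geometric statement about projections. For monomial spanning elements $\xi_i = x^{n_i}v^{m_i}q^{l_i}L_i$ with $i=1,2$, the skein product is $\xi_1\cdot\xi_2 = x^{n_1+n_2}v^{m_1+m_2}q^{l_1+l_2}\,(L_1\cdot L_2)$. Feeding this into the definition \eqref{skgreq}, the desired identity $\omega_H(\xi_1\cdot\xi_2)=\omega_H(\xi_1)+\omega_H(\xi_2)$ is equivalent to
$$\omega_R(\p_*(L_1\cdot L_2)) = \omega_R(\p_*(L_1)) + \omega_R(\p_*(L_2)),$$
since the $v$-exponents add and the $x$- and $q$-exponents do not enter $\omega_H$ at all. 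Thus everything hinges on additivity of the projected winding number under the algebra product.

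To prove this geometric identity I would use that the product in $\hqs$ is realized by \emph{stacking}: one places $L_1$ in $S\times[1/2,1]$ above $L_2$ in $S\times[0,1/2]$ to form $L_1\cdot L_2\subset S\times\I$. As links in the thickened surface these two pieces are disjoint, so the set of components of $L_1\cdot L_2$ is precisely the disjoint union of the components of $L_1$ and of $L_2$. Projecting to $S$, the only new features are crossings where an $L_1$-strand passes over an $L_2$-strand; under $\p_*$ these become transverse intersection points of the immersed multicurve but do not alter the regular homotopy class of any individual component. Hence $\p_*(L_1\cdot L_2)=\p_*(L_1)+\p_*(L_2)$ as elements of $\ZZ\inp{\hat{\pi}_R(S)}$, and since $\omega_R$ of a multicurve is by definition the sum of the winding numbers of its components, the claimed additivity follows at once. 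I regard this step — making precise that the skein product is stacking and that projection distributes over components without disturbing their regular homotopy classes — as the genuine crux; the rest is bookkeeping with exponents.

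Finally, to obtain the factorization through the tensor product I would observe that in \eqref{skgreq} only the exponent of $v$ contributes to $\omega_H$, so $x$ and $q$ carry winding degree $0$. Therefore each summand $\hqs_a$ is a module over the degree-zero subring $\ZZ[x^{\pm 1},q^{\pm 1}]$, and the multiplication, being $\ZZ[x^{\pm 1},q^{\pm 1}]$-bilinear, descends to the asserted map
$$\hqs_a \ott_{\ZZ[x^{\pm 1},q^{\pm 1}]} \hqs_b \xto{\cdot} \hqs_{a+b},$$
completing the verification that $\hqs$ is graded as a ring by the winding number.
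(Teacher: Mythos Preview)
Your proposal is correct and follows essentially the same approach as the paper: both use Proposition~\ref{relprop} to obtain the decomposition, reduce multiplicativity to monomials in $\B(S)$, and then observe that the stacking product $L_1\cdot L_2$ has as its knot components exactly the disjoint union of the components of $L_1$ and $L_2$, so that $\omega_R$ is additive. Your write-up is slightly more explicit about the stacking/projection step and about the $\ZZ[x^{\pm 1},q^{\pm 1}]$-bilinearity underlying the tensor factorization, but the argument is the same.
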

\begin{proof}
  By Prop. \ref{relprop}, the relations defining the HOMFLY-PT skein algebra respect the winding number.
So for each $a\in\ZZ/\chi$, the $a$th homogeneous component $\hqs_a$ of $\hqs$ is given by
  $$\hqs_a := \ZZ\inp{x\in\B(S) : \omega_H(x) = a}.$$

We conclude by checking that the grading is additive with respect to multiplication. If $l\in \hqs_a$ and $l'\in \hqs_b$ then $l$ is a $\ZZ$-linear combination of elements in $\B(S)$ with winding number $a$, same for $l'$, so it suffices to check additivity for monomials $l = x^a q^b v^c L$ and $l' = x^\ell q^m v^n L'$ in $\B(S)$. Then
\begin{align*}
  \omega_H(l\cdot l') &=  \omega_H (x^{a+\ell} q^{b+m} v^{c+n}L \cdot L')\\
  &= c+n + \omega_R(L\cdot L') \\
  &= c+n + \omega_R(L) + \omega_R(L')\\
  &= \omega_H(l) + \omega_H(l')\\
  \end{align*}
where the equality $\omega_R(L\cdot L') = \omega_R(L) + \omega_R(L')$ follows from the observation that the individual knot components in the stacking product are the same as those of the disjoint union of $L$ and $L'$,  $L\cdot L' = (\sqcup_i L_i) \sqcup (\sqcup_j L_j)$.

\begin{rmk}
  Since a grading of the HOMFLY-PT skein algebra attached to the framing
  must respect the oriented loop product appearing in 
the relation 1 of Def. \ref{skalgdef}, it isn't a $\ZZ$-grading for the same reasons as
  those mentioned in Rmk. \ref{cyclicgrrmk}.
\end{rmk}

\end{proof}

\bibliography{newvs}{}
\bibliographystyle{amsalpha}
\Addresses

\end{document}